\def\th{\theta}
\def\Cal{\mathcal}
\def\P{{\Cal P}}
\def\I{{\Cal I}}
\def\Stab{{\hbox{\rm Stab}}}
\def\f0{f_0}
\def\Fc0{\varphi_0}
\def\I_k {I_{-}^{k/2}}
\def\I+k {I_{+}^{k/2}}
\def\bbr{{\Bbb R}}
\def\bbn{{\Bbb N}}
\def\bbh{{\Bbb H}}
\def\bbe{{\Bbb E}}
\def \hn{{\bbh^n}}
\def\diag{{\hbox{\rm diag}}}
\def\cosh{{\hbox{\rm cosh}}}
\def\sinh{{\hbox{\rm sinh}}}
\def\ch{{\hbox{\rm cosh}}}
\def\sh{{\hbox{\rm sinh}}}
\def\part{\partial}
\def\intl{\int\limits}
\def\b{\beta}
\def\Gam{\Gamma}
\def\Om{\Omega}
\def\a{\alpha}
\def\Del{\Delta}
\def\vp{\varphi}
\def\gam{\gamma}
\def\sig{\sigma}
\def\z{\zeta}
\def\e{\varepsilon}
\font\frak=eufm10
\def\fr#1{\hbox{\frak #1}}
\def\frH{\fr{H}}
\def\part{\partial}
\def\intl{\int\limits}
\def\b{\beta}
\def\Gam{\Gamma}
\def\Om{\Omega}
\def\a{\alpha}
\def\hn{\bbh^n}
\newtheorem{theorem}{Theorem}[section]
\newtheorem{lemma}[theorem]{Lemma}
\newtheorem{definition}[theorem]{Definition}
\newtheorem{proposition}[theorem]{Proposition}
\theoremstyle{remark}
\newtheorem{remark}[theorem]{Remark}
\newtheorem{example}[theorem]{Example}
\numberwithin{equation}{section}
\newcommand{\be}{\begin{equation}}
\newcommand{\ee}{\end{equation}}
\newcommand{\bea}{\begin{eqnarray}}
\newcommand{\eea}{\end{eqnarray}}
\newcommand{\Bea}{\begin{eqnarray*}}
\newcommand{\Eea}{\end{eqnarray*}}
\def\sideremark#1{\ifvmode\leavevmode\fi\vadjust{\vbox to0pt{\vss
 \hbox to 0pt{\hskip\hsize\hskip1em
\vbox{\hsize2cm\tiny\raggedright\pretolerance10000
 \noindent #1\hfill}\hss}\vbox to8pt{\vfil}\vss}}}%
\begin{document}

\title [Radon Transforms]{Radon Transforms over Lower-Dimensional\\  Horospheres in Real  Hyperbolic Space
}


\author{ W.O. Bray and B. Rubin }
\address{Department of Mathematics,
Missouri State University, USA}
\email{WBray@MissouriState.edu}

\address{Department of Mathematics, Louisiana State University, Baton Rouge,
Louisiana 70803, USA}
\email{borisr@math.lsu.edu}

\subjclass[2010] {Primary 44A12; Secondary  44A15}


\keywords{Real hyperbolic space; Horospherical  transforms;  Radon transforms; Inversion formulas; $L^p$ spaces.}


\begin{abstract}

We study horospherical Radon transforms that integrate  functions on the $n$-dimensional real hyperbolic space over horospheres of arbitrary fixed dimension $1\le d\le n-1$. Exact existence conditions and new explicit  inversion formulas  are obtained for these transforms acting on  smooth functions and functions belonging to $L^p$. 
The case $d=n-1$ agrees with the well-known Gelfand-Graev transform.

\end{abstract}




\maketitle

\section{Introduction}
\label{intro}
\setcounter{equation}{0}

Let $\bbh^n$ be  the $n$-dimensional real hyperbolic space.
 We will be dealing
 with the hyperboloid model of this space,  when  $\hn$ is identified with the upper sheet of the two-sheeted hyperboloid in the pseudo-Euclidean space $E^{n, 1} \sim \bbr^{n+1}$.
 The term {\it horosphere}   (or {\it orisphere}), which means  a sphere of infinite radius, was introduced by  Lobachevsky.
   In the hyperboloid model, the $(n-1)$-dimensional horosphere 
 is a cross-section  of the hyperboloid $\hn$ by the hyperplane whose normal lies in the  asymptotic
cone.

In the present article, we study  horospherical  Radon-like transforms $f \to \hat f$ that integrate functions on $\bbh^n$  over $d$-dimensional horospheres for  arbitrary $1\le d\le n-1$. Our main objective is explicit definition of these transforms, their properties, and  inversion formulas  on $L^p$ and smooth functions.

  In the case $d=n-1$, the corresponding  horospherical transforms are also known as the  {\it Gelfand-Graev transforms}; see
\cite[p. 290]{GGV},  \cite[p. 532]{V}, \cite[p. 162]{VK}. In these publications, a compactly supported smooth function $f$ was reconstructed from $\hat f$   in terms of certain  integrals that should be understood in the sense of distributions.

Our approach essentially differs from that in \cite{GGV, V, VK}   and consists of two parts. The first part deals with arbitrary continuous and $L^p$ functions and relies on the properties of some mean value operators. This idea  dates back to the classical works by Funk,  Radon, and Helgason; see historical notes in \cite{H11,  Ru15}. We show that it is applicable to horospherical transforms over horospheres of arbitrary dimension.

The second part deals with
 compactly supported smooth functions. Here the reconstruction of $f$ reduces to inversion of certain operators of the potential type by means of polynomials of the Beltrami-Laplace operator. Operators of this kind are hyperbolic counterparts of the classical Riesz potentials (see, e.g., Stein \cite{St1}) and might be of independent interest.  This approach was applied by Helgason  to totally geodesic Radon transforms of smooth functions on constant curvature spaces and extended by Rubin \cite{Ru15} to horospherical  transforms over $(n-1)$-dimensional horospheres.

It was surprising that Radon transforms over lower-dimensional horospheres in the hyperbolic space were not considered in the literature (to the best of our knowledge). Our aim is to complete this gap.

It is worth noting that horospherical  transforms play an important role in the representation theory and  appear in the general context of symmetric spaces under the name  {\it  ``horocycle transform''}. More information on this subject can be found in the works by Gelfand and Graev  \cite{GG59},
Helgason \cite{H73, H08, H11, H12}, Gindikin \cite{Gi01, Gi08, Gi13},  Gonzalez \cite{Go10a},  Gonzalez and Quinto \cite{GQ94}, Hilgert,  Pasquale, and  Vinberg  \cite{HPVa, HPV}; see also   Berenstein and Casadio Tarabusi \cite{BC94},   Bray and Rubin \cite{Bru} for the case $d=n-1$.  The methods and results of these publications essentially differ from those in the  present article.

{\bf Plan of the paper.} Section 2 contains auxiliary facts related to analysis on $\hn$. In Section 3 we define the horospherical transform $ \hat f (\xi)$ for $d$-dimensional horospheres $\xi$. In particular, we show that if $f\in L^p (\hn)$, then $\hat f (\xi)$ is finite for almost all $\xi$ provided $1\le p< 2(n-1)/d$ and this bound is sharp.
Section 4 is devoted to inversion formulas for $\hat f$ on  functions $f\in C_c^\infty (\hn)$ and $f\in L^p (\hn)$. 
The main  results are stated in Theorems  \ref{jOOOthERC2}, \ref{ThHORYP},  and \ref{ThHORYP1}.  We conclude the paper by Section 5, in which some open problems are formulated.

{\bf Acknowledgements.} After the manuscript had been  written, Professor Helgason kindly informed us that close problems, related to generalizations of the horospherical (or horocycle) transforms, and, in particular, to their lower-dimensional modifications in the general context of symmetric spaces were independently studied by M. Morimoto \cite{Mor1, Mor2} and E. Kelly \cite{Kel}. The authors are deeply grateful to
Sigurdur Helgason and Simon Gindikin for  correspondence. Special thanks go  to  Edmund Kelly who kindly sent us his 1974 preprint \cite{Kel}. The methods of our paper and the results are essentially different from those in \cite{Kel, Mor1, Mor2}.

\section{Preliminaries} \label{sec:1}

\subsection{Basic Definitions}\label {PPMMBRE}

The  pseudo-Euclidean space $\bbe^{n, 1}$, $n\ge 2$, is  the  $(n+1)$-dimensional real vector space of points
in $\bbr^{n +1}$ with the inner product
\be\label {tag 2.1-HYP}[{\bf x}, {\bf y}] = - x_1 y_1 - \ldots -x_n y_n + x_{n +1} y_{n +1}. \ee

We denote by $ \ e_1, \ldots, e_{n +1}$  the coordinate unit vectors  in $\bbe^{n,1}$;
$S^{n -1}$ is  the unit sphere  in the coordinate plane $\bbr^n=\{x\in \bbe^{n,1} : x_{n +1}=0\} $; $\sigma_{n-1} =  2\pi^{n/2} \big/ \Gamma (n/2)$ is the
surface area of $S^{n-1}$. For    $\theta \in S^{n-1}$, $d\theta$ denotes the surface element on $S^{n-1}$;
$d_*\theta= d\theta/\sigma_{n-1}$ is the normalized surface element on $S^{n-1}$ (a similar notation will be used for normalized surface elements of lower-dimensional spheres).

 Given a set $X$ and a group $G$,  the group action of $G$ on $X$ is a function
$G\times X \to X$ with $(g,x) \to gx$ where $g\in G$ and $x\in X$. We also write $Gx$ for the set $\{y\in X: y=gx \; \text{\rm for some $g\in G$}\}$; cf. Knapp \cite [p. 159]{Kn}.

 The $n$-dimensional real hyperbolic space $\hn$ is realized as the upper sheet of the two-sheeted hyperboloid  in $\bbe^{n, 1}$, that is,
\[\hn = \{{\bf x}\in \bbe^{n,1} :
[{\bf x}, {\bf x}] = 1, \ x_{n +1} > 0 \}.\]
In the following, the points of $\hn$  will be denoted by the non-boldfaced letters, unlike the generic points in $\bbe^{n,1}$.
The point $x_0=(0, \ldots, 0,1)\sim e_{n +1}$ serves as the origin of $\hn$;
\[ \Gam =\{{\bf x} \in \bbe^{n,1}: \, [{\bf x}, {\bf x}]=0, \, x_{n+1} >0\}.\]
is the asymptotic cone for $\hn$. The notation
\[G=SO_0(n,1)\]
is used for the identity component of the special pseudo-orthogonal group $SO(n,1)$  preserving the bilinear form $[{\bf x},  {\bf y}]$.

The geodesic distance between the points $x$ and $y$ in $\hn$ is defined by $d(x,y) = \cosh^{-1}[x,y]$, so that
\[ [x,a]=\ch r\]
is the equation of the $(n-1)$-dimensional geodesic sphere in $\hn$ of radius $r$ with center at $a\in\hn$.

We will be using different coordinate systems on $\hn$. Every  point
$x  \in \hn$ is represented in the {\it hyperbolic coordinates} $(\th, r)\in S^{n -1} \times [0,\infty)$ as
\be\label {taddd-HYP}  x = \theta\, \sh r  + e_{n+1} \, \ch r.\ee
In the {\it horospherical coordinates} $(v,t)\in  \bbr^{n-1} \times \bbr$, we have
\bea
\label{lol} x&=&n_{v}a_{t}x_0=a_{t}n_{e^{-t}v}x_0\\
\label{horo coord} &=&(e^{-t}v,\,\sinh t+\frac{|v|^{2}}{2}e^{-t},\,\cosh t+\frac{|v|^{2}}{2}e^{-t}).
\eea
Here  $v\in\bbr^{n-1}$ (a column vector),
\be\label{09m} n_{v}\!=\!\left[
\begin{array}
[c]{ccc}%
I_{n-1} & -v & v\\
v^{T} & 1\!-\!|v|^{2}/2 & |v|^{2}/2\\
v^{T} & -|v|^{2}/2 & 1\!+\!|v|^{2}/2
\end{array}
\right]; \ee
\be\label{mo3a1}
a_{t}=\left[
\begin{array}
[c]{ccc}%
I_{n-1} & 0 & 0\\
0 & \cosh t & \sinh t\\
0 & \sinh t & \cosh t
\end{array}
\right], \qquad t\in \bbr,\ee
is the hyperbolic rotation in the coordinate plane $(x_n, x_{n+1})$;  cf. \cite [p. 13] {VK}.
Changing variable $e^{-t}v \to v$, we also have
\be\label{horo coord1}
x=a_{t}n_{v}x_0 =n_{e^{-t}v}a_{t}x_0=(v,\,\sinh t+\frac{|v|^{2}}{2}e^{t},\,\cosh t+\frac{|v|^{2}}{2}e^{t}).
\ee
 A straightforward matrix multiplication yields
\be\label{ikm}  n_{v_1} n_{v_2}=  n_{v_1  +v_2}\quad \text {\rm for all $\; v_1, v_2 \in \bbr^{n-1}$}.\ee

Let
\be\label {o8a3}
 K\!=\!\left \{\left[
\begin{array}
[c]{cc}%
k & 0\\
0 & 1
\end{array}
\right] \, : k\in SO(n)\right\}.\ee
Abusing notation, we identify $ k\in SO(n)$ with the corresponding matrix $\left[
\begin{array}
[c]{cc}%
k & 0\\
0 & 1
\end{array}
\right]\in G$.
The subgroups of matrices $a_t$ and $n_v$ will be denoted by $A$ and $N$, respectively.
Since the stabilizer of $x_0$ in $G$ coincides with $K$, (\ref{lol}) implies that every $g\in G$ is representable in the form
\be\label{yuyy} g= n_v a_t k. \ee
This representation is unique and  agrees with the Iwasawa decomposition $G=NAK$.

We fix a $G$-invariant measure $dx$ on $\hn$, which has the following form in the coordinates (\ref{taddd-HYP}):
\be\label {kUUUPqs}  d x = \sh^{n -1} r \, d r d \theta.\ee
If $f$ is $K$-invariant, that is,    $f(x)\equiv f_0 (x_{n+1})$, then
\be\label{ppooii}
\intl_{\hn} f(x)\, dx=\sig_{n-1}\intl_1^\infty f_0(s) (s^2 -1)^{n/2 -1}\, ds.\ee

 The Haar measure $dg$ on $G$ will be normalized in a consistent way by the formula
 \be\label {tag 2.3-AIM}
\intl_G f(gx_0)\,dg=\intl_{\hn} f(x)\,dx.\ee
Using (\ref{yuyy}), we also have
\be\label{frg} dg= e^{(1-n)t}  dt dv dk,\ee
where $dk$ is the normalized Haar measure on $K$, $dv$ and $dt$ are the standard Euclidean measures on $\bbr^{n-1}$ and $\bbr$, respectively; cf. \cite[p. 23]{VK}. Thus,
\be\label{frg1}
\intl_G f(g x_0)\,dg=\intl_\bbr e^{(1-n)t}\, dt \intl_{\bbr^{n-1}} f(n_v a_t x_0)\,dv=\intl_{\hn} f(x)\,dx \ee
or, by (\ref{lol}),
\be\label{frg12}
\intl_G f(g x_0)\,dg=\intl_\bbr dt \intl_{\bbr^{n-1}} f(a_t n_v  x_0)\,dv=\intl_{\hn} f(x)\,dx; \ee
cf. \cite[Lemma 3.1]{Ru17}.
The equality (\ref{frg12}) agrees with the representation
\be\label{yu} g= a_t n_v  k, \ee
in terms of which $dg=dt dv dk$ and $G=ANK$.

Replacing $g$ by $kg$, $k\in K$, in (\ref{frg12}), we obtain
\be\label{frg1a2s}
\intl_G f(g x_0)\,dg=\intl_K dk\intl_\bbr dt \intl_{\bbr^{n-1}} f(ka_t n_v  x_0)\,dv=\intl_{\hn} f(x)\,dx. \ee
This equality (\ref{frg1a2s}) agrees with the representation
\be\label{yu} g= ka_t n_v, \ee
in terms of which $dg=dkdtdv $ and $G=KAN$.

{\bf More notation.} In the following,
  $ u\cdot v  =u_1 v_1 + \ldots + u_n v_n$  denotes the
usual inner product of the vectors $u, v \in \bbr^n$; $I_m$ is the identity $m\times m$ matrix;
$C(\hn)$ is the
space of continuous functions on $\hn$; $C_0 (\hn)$ denotes the space of continuous functions
on $\hn$ vanishing at infinity.
We also set
\be\label{benatur} C_\mu (\hn)=\{f\in C(\hn): f(x)= O(x_{n+1}^{-\mu})\}.\ee

Let $\Om=\{{\bf x}\in E^{n,1}: [{\bf x}, {\bf x}]>0, \,x_{n+1} >0\}$ be
 the interior of  the cone $\Gam$.
 We denote by
  $ C^\infty_c(\hn)$ the space of  infinitely differentiable compactly supported  functions on $\hn$. This space is formed by the restrictions onto $\hn$ of functions belonging to $C^\infty_c (\Om)$.

We say that  an integral under consideration
 exists in the Lebesgue sense if it is finite when the corresponding integrand is replaced by its absolute value.  The letter $c$ (sometimes with subscripts) denotes  a constant that may vary at each  occurrence.

\subsection{Horospheres}

\subsubsection{  The case $d=n-1$}

An $(n-1)$-dimensional
 horosphere  in $\hn$ is defined as the cross-section of the hyperboloid  $\hn$ by the hyperplane $[{\bf x}, {\bf b}]=1$, where
$ {\bf b}$  is a point of the cone $\Gam$.  The  correspondence between the set $\Xi_{n-1}$ of all $(n-1)$-horospheres  and the set of all points in $\Gam$  is  one-to-one. One can equivalently define  $\Xi_{n-1}$ as the set of all $G$-orbits
\be\label {o8} \Xi_{n-1}=\{g\xi^0_{n-1}: \; g\in G\}\ee
of the ``basic'' horosphere $\xi^0_{n-1}$ corresponding to the point
\[  {\bf b}_{0}=(0,\ldots,0,1,1)\in \Gam.\]
The stabilizer $G^0_{n-1}$ of ${\bf b}_{0}$ (and therefore, of $\xi^0_{n-1}$) in $G$ is the semidirect product
\be\label{mo1pp}
G^0_{n-1}=N \rtimes  M\ee
where  $N$ is the group of transformations (\ref{09m}) and
\[ M\!=\!\left \{\left[
\begin{array}
[c]{cc}%
m & 0\\
0 & I_2
\end{array}
\right] \, : m\in SO(n-1)\right\}.\]
We observe that $N$ is a normal subgroup of $G^0_{n-1}$.
For the sake of simplicity, we write (\ref{mo1pp}) as
\be\label{mo1pp1}
G^0_{n-1}=\Stab_G (\xi^0_{n-1})=\Stab_G ({\bf b}_{0})= MN\ee
(cf. \cite[p. 60]{H08}). Note also that $MN=NM$.

\subsubsection{  The case $d<n-1$}

We set
 \be\label{mo1} \bbr^{n-d-1}\!=\!\bbr e_1 \oplus \cdots \oplus\bbr e_{n-d-1} \quad \text{\rm (if $d \!= \!n\!-\!1$ this set is empty)},\ee
\be\label{mo2} \bbr^{d}=\bbr e_{n-d} \oplus \cdots \oplus\bbr e_{n-1}, \quad  \bbr^{d+1}=\bbr^{d} \oplus\bbr e_{n},\ee
\be\label{mo3}  \bbe^{d+1,1}\simeq  \bbr^{d+2}=\bbr^{d+1} \oplus\bbr e_{n+1}, \qquad  \bbh^{d+1}=\hn \cap \bbr^{d+2};\ee
\be\label{bo3}
 \xi_0= \bbh^{d+1} \cap \{x \in \hn: [x,{\bf b}_0]=1\}, \qquad {\bf b}_{0}=(0,\ldots,0,1,1).
 \ee
The last formula defines a $d$-dimensional  horosphere in $\bbh^{d+1}$. We call it the basic one.
The set $\Xi_{d}$ of $d$-dimensional horospheres in $\hn$ ($d$-horospheres, for short) is defined as the collection of all $G$-orbits
\be\label {o8a} \Xi_{d}=\{g\xi_0: \; g\in G\}.\ee

Let $M_d\subset G$ be the subgroup of matrices of the form
\[
 m_{\a,\b}=\left[
\begin{array}
[c]{cc}%
\tilde m_{\a,\b} & 0\\
0 & I_2
\end{array}
\right]\]
where
\[
\tilde m_{\a,\b} =\left[
\begin{array}
[c]{cc}%
\a & 0\\
0 & \b
\end{array}
\right]\in S(O(n-d-1)\times O(d)).\]
Let also
 \be\label {o8a1}
 N_d=\{n_v\in N:  v_1= \ldots =v_{n-1-d}=0\},
 \ee
 \be\label {o8a1a}
 N_{n-1-d}=\{n_v\in N:  v_d= \ldots =v_{n-1}=0\},
 \ee
  be the subgroups of $N$ (cf. (\ref{09m})) generated by vectors $v\in \bbr^{n-1}$ with the corresponding  zero coordinates.
  A straightforward matrix multiplication yields
\be\label {o8s5}
 m_{\a,\b} n_v=n_{\b v} m_{\a,\b}\ee
 for all $m_{\a,\b} \in M_d$ and $n_v \in N_d$,
 so that we can write $M_d N_d=N_d M_d$.  We denote
 \be\label {o8135}
G_d^0= M_d N_d=N_d M_d.\ee

\begin{proposition}\label{plu} ${}$\hfill

\noindent {\rm (i)} The basic $d$-horosphere  $\xi_0$ is the $N_d$-orbit of $x_0$. Moreover,
 \be\label {o8135x}
 \xi_0=G_d^0 \,x_0.\ee

\noindent {\rm (ii)} The subgroup $G_d^0=M_d N_d$ is the stabilizer of $\xi_0$ in $G$, so that the
 set $\Xi_{d}$ of all $d$-horospheres in $\hn$ is isomorphic to the quotient space $G/G_d^0$.
\end{proposition}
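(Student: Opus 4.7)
My plan is to establish (i) by a direct computation in horospherical coordinates, and then deduce (ii) via the orbit-stabilizer correspondence combined with a standard reduction through the isotropy group $K$ of $x_0$.

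For (i), I would apply \eqref{horo coord1} at $t=0$, which gives $n_v x_0 = (v,\, |v|^2/2,\, 1+|v|^2/2)$. Restricting to $n_v \in N_d$, i.e.\ to $v=(0,\ldots,0,v_{n-d},\ldots,v_{n-1})$, places $n_v x_0$ inside $\bbr^{d+2}$ and yields $[n_v x_0, \mathbf{b}_0] = x_{n+1}-x_n = 1$, so $n_v x_0 \in \xi_0$. Conversely, any $x \in \xi_0$ has its first $n-d-1$ coordinates zero, and combining $[x,\mathbf{b}_0]=1$ with $[x,x]=1$ uniquely determines $x_n$ and $x_{n+1}$ in terms of the middle coordinates, producing the unique $v$ with $x=n_v x_0$. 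This gives $\xi_0 = N_d x_0$. To upgrade to $\xi_0 = G_d^0 x_0$, I would observe that every $m_{\a,\b}\in M_d$ fixes both $e_n$ and $e_{n+1}$, so $M_d$ fixes $x_0$; since $M_d$ also fixes $\mathbf{b}_0$ and preserves $\bbr^{d+2}$, it stabilizes $\xi_0$, whence $M_d N_d x_0 = M_d \xi_0 = \xi_0$.

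For (ii), the inclusion $G_d^0 \subseteq \Stab_G(\xi_0)$ is immediate from the above observations about $M_d$ together with $N_d \xi_0 = N_d N_d x_0 = N_d x_0 = \xi_0$, which uses \eqref{ikm}. For the reverse inclusion, given $g \in \Stab_G(\xi_0)$ I would use $gx_0 \in \xi_0 = N_d x_0$ to write $gx_0 = n_v x_0$ with $n_v \in N_d$, so that $k := n_v^{-1} g$ lies in $\Stab_G(x_0)=K$ and still stabilizes $\xi_0$. The task then reduces to identifying $\Stab_G(\xi_0)\cap K$ with $M_d$; once this is in place, the isomorphism $\Xi_d \simeq G/G_d^0$ follows from orbit-stabilizer applied to the transitive action on $\Xi_d = G\xi_0$.

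The main obstacle is precisely this last identification, since a priori an element of $K$ could mix the $e_n$-direction with $\bbr^d$ while still permuting $\xi_0$. Writing $k=\operatorname{diag}(k',1)$ with $k'\in SO(n)$, I would exploit that $k$ fixes $e_{n+1}$, so the defining equation $x_{n+1}-x_n=1$ forces $k$ to preserve the $n$-th coordinate throughout $\xi_0$. Substituting the curve $v=te_j$ with $j \in \{n-d,\ldots,n-1\}$ into $n_v x_0$, applying $k$, and matching powers of $t$ would give $(k'e_j)_n=0$ at order $t$ and $(k'e_n)_n=1$ at order $t^2$; orthogonality of $k'$ then forces $k'e_n=e_n$ and $k'(\bbr^d)=\bbr^d$, whence $k'(\bbr^{n-d-1})=\bbr^{n-d-1}$ by preservation of orthogonal complements. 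Thus $k'=\operatorname{diag}(\alpha,\beta,1)$ with $(\alpha,\beta)\in S(O(n-d-1)\times O(d))$, i.e.\ $k\in M_d$, completing the argument.
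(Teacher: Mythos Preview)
Your overall strategy is sound and genuinely different from the paper's. For part (ii) the paper identifies $\xi_0$ with the pair $(\bbr^{n-1-d},\mathbf{b}_0)$ (via the linear span of $\xi_0$ and the unique point on $\Gam$ determining the ambient $(n-1)$-horosphere), uses $\Stab_G(\mathbf{b}_0)=MN$ to reduce to computing $\Stab_{MN}(\bbr^{n-1-d})$, and then applies a general element of $MN$ to the basis vectors $e_1,\ldots,e_{n-1-d}$. You instead reduce through the isotropy of $x_0$: writing $g=n_v k$ with $n_v\in N_d$ and $k\in K$, you are left with identifying $\Stab_K(\xi_0)$ by probing $\xi_0$ with curves. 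Both reductions are natural; yours has the appeal of staying on the hyperboloid and using only the point $x_0$, while the paper's works with linear subspaces and the cone point $\mathbf{b}_0$.

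There is, however, a real gap in your final step. From the $n$-th coordinate constraint alone (i.e., $(kx)_n=x_n$ on $\xi_0$) you correctly extract $(k'e_j)_n=0$ and $(k'e_n)_n=1$, and the latter together with $|k'e_n|=1$ gives $k'e_n=e_n$. But once $k'e_n=e_n$ is known, the relations $(k'e_j)_n=0$ are \emph{automatic} by orthogonality and carry no further information; they do not force $k'(\bbr^d)=\bbr^d$. (For instance, with $n=3$, $d=1$, the rotation $k'$ sending $e_2\mapsto e_1$, $e_1\mapsto -e_2$, $e_3\mapsto e_3$ satisfies all the constraints you have written down, yet does not preserve $\bbr^d=\bbr e_2$.) What pins down $k'(\bbr^d)=\bbr^d$ is the \emph{other} defining condition of $\xi_0$, namely that the first $n-d-1$ coordinates of $kx$ vanish. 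Matching the order-$t$ terms in those coordinates for your curve $v=te_j$ yields $(k'e_j)_i=0$ for $i\le n-d-1$; combined with $(k'e_j)_n=0$ this places $k'e_j$ in $\bbr^d$, and only then does your block-diagonal conclusion $k'=\operatorname{diag}(\alpha,\beta,1)$ follow.
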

\begin{proof} The first statement in (i) is a consequence of the similar fact for $d=n-1$. Then
 \[ G_d^0 \,x_0= N_d M_d \,x_0=N_d \,x_0=\xi_0.\]

To prove (ii), we observe that by (\ref{bo3}), $\xi_0$ can be identified with the pair $(\bbr^{n-1-d}, {\bf b}_{0})$. Thus it suffices to show that  $M_d N_d$ is the stabilizer of this pair, i.e.,
\be\label {o8a4y} \Stab_G  (\bbr^{n-1-d}, {\bf b}_{0})=M_d N_d.\ee
Because
\[ \Stab_G  (\bbr^{n-1-d}, {\bf b}_{0})=\Stab_G  (\bbr^{n-1-d}) \cap \Stab_G  ({\bf b}_{0})\]
and $\Stab_G  ({\bf b}_{0})=MN$ (see (\ref{mo1pp1})),
it remains to prove that
\be\label {o8a4y1} \Stab_{MN}  (\bbr^{n-1-d}) =M_d N_d.\ee
The embedding $M_d N_d \subset  \Stab_{MN}  (\bbr^{n-1-d})$ is obvious because  $m_{\a,\b}\in M_d$
 and $n_v  \in N_d$ preserve $\bbr^{n-1-d}$.
 Conversely, suppose that $g\in MN$ preserves $\bbr^{n-1-d}$ and let $g=\gam n_v$ with
 \[
 \gam=\left[
\begin{array}
[c]{cc}%
\tilde \gam & 0\\
0 & I_2
\end{array}
\right], \quad \tilde \gam =\left[
\begin{array}
[c]{cc}%
\a & \mu\\
\nu & \b
\end{array}
\right]\in SO(n-1),\]
and $v=(v_1,\ldots, v_{n-1})\in \bbr^{n-1}$.
If $e_j$ ($j=1, \ldots , n-1-d$) are coordinate unit vectors, then
\[
ge_j=\gam n_v e_j=\left[
\begin{array}
[c]{cccc}%
\a & \mu & 0 & 0\\
\nu & \b & 0 & 0\\
0 & 0 & 1 & 0\\
0 & 0 & 0 & 1
\end{array}
\right]\,
\left[
\begin{array}
[c]{c}%
e_j\\
0 \\
v_j\\
v_j
\end{array}
\right]=\left[
\begin{array}
[c]{c}%
\a e_j\\
\nu e_j \\
v_j\\
v_j
\end{array}
\right]. \]

The vector on the right-hand side belongs to $\bbr^{n-1-d}$ if and only if $\nu$ is a zero matrix  and $v_j=0$ for all $j=1, \ldots , n-1-d$. Thus, if $g\in MN$ preserves $\bbr^{n-1-d}$, then, necessarily,
$g=\left[
\begin{array}
[c]{cc}%
\tilde \gam & 0\\
0 & I_2
\end{array}
\right]\, n_v$ with $n_v\in N_d$ and $\tilde \gam=\left[
\begin{array}
[c]{cc}%
\a & \mu\\
0 & \b
\end{array}
\right]$.
Because $\tilde \gam \in SO(n-1)$, we have $\tilde \gam^T \tilde \gam=\tilde \gam \tilde \gam^T=I_{n-1}$. Multiplying matrices, we obtain
\[\a^T\a=I_{n-1-d}, \qquad \a^T \mu =0,\qquad \mu^T\mu +\b^T\b=I_d,\]
\[\a\a^T +\mu\mu^T=I_{n-1-d}, \qquad \mu \b^T=0,\qquad \b\b^T=I_d.\]
It follows that
\[\a^T\a=\a\a^T =I_{n-1-d}, \qquad b^T\b=\b\b^T=I_d,\]
and therefore, $\a\in O(n-1-d)$, $\b\in O(d)$. Since $\tilde \gam \in SO(n-1)$, we obtain $\tilde \gam=\diag (\a,\b)\in  S(O(n-d-1)\times O(d))$.
The latter means that $\Stab_{MN}  (\bbr^{n-1-d}) \subset M_d N_d$ which completes the proof.
\end{proof}

According to the Iwasawa decomposition $G=KAN$, every $g\in G$ is uniquely represented as $g=ka_tn_v$, where $k \in K$, $a_t \in A$, and $n_v \in N$.
We write $v \in \bbr^{n-1}$ as an orthogonal sum
\be\label {o8a4}  v=u+w, \qquad u \in \bbr^{n-1-d}, \quad w \in \bbr^{d}.\ee
Noting that $n_w \xi_0 =\xi_0$ for all $w \in \bbr^{d}$, we conclude that  every $d$-horosphere $\xi=g\xi_0$ can be represented as
\be\label {o8a5}
\xi=ka_tn_u \xi_0, \qquad k \in K, \quad a_t \in A, \quad n_u \in N_{n-1-d}.\ee
Following this equality, we equip $\Xi_d$ with the  measure $d\xi$ by setting
\be\label {o8a2q}
\intl_{\Xi_d} \vp (\xi)\, d\xi= \intl_K dk\intl_\bbr dt \intl_{\bbr^{n-1-d}} \vp(ka_t n_u  \xi_0)\,du.\ee

\begin{proposition}\label{pluc}  Let $\Xi_{d}$  be the set (\ref{o8a}) of $d$-horospheres in $\hn$ with the basic horosphere $\xi_0$ (the ``origin" of $\Xi_{d}$) and the stabilizer  $G_d^0=\Stab_G (\xi_0)$. The set $\,\Xi_{d}$ coincides with the set of orbits of conjugates of $G_d^0$. Specifically,
\be\label {o8va2q}
\Xi_{d}=\{gG_d^0 g^{-1} x: \; g\in G, \,x\in \hn\}.\ee
\end{proposition}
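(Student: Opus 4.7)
My plan is to prove the equality as an essentially formal consequence of Proposition \ref{plu}, via two inclusions.

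For the forward inclusion $\Xi_d \subseteq \{gG_d^0 g^{-1}x : g\in G,\, x\in \hn\}$, I take $\xi \in \Xi_d$ and use (\ref{o8a}) to write $\xi = g\xi_0$ for some $g\in G$. Combining this with Proposition \ref{plu}(i), which asserts $\xi_0 = G_d^0\,x_0$, yields
\[
\xi \;=\; g\xi_0 \;=\; g G_d^0\, x_0 \;=\; (g G_d^0 g^{-1})\,(g x_0).
\]
Setting $x = gx_0 \in \hn$, this exhibits $\xi$ as the orbit of $x$ under the $G$-conjugate $g G_d^0 g^{-1}$ of $G_d^0$, precisely of the required form.

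For the reverse inclusion, I take arbitrary $g\in G$ and $x\in\hn$ and aim to identify $gG_d^0 g^{-1} x$ with an element of $\Xi_d$. By the transitivity of $G$ on $\hn$, I write $x = h x_0$ for some $h\in G$, so that
\[
g G_d^0 g^{-1}\,x \;=\; g\, G_d^0\,(g^{-1}h)\, x_0.
\]
I would then relate this orbit of $G_d^0$ back to $\xi_0 = G_d^0\, x_0$ by absorbing $g^{-1}h$ into a representative modulo the stabilizer $G_d^0 = \Stab_G(\xi_0)$ provided by Proposition \ref{plu}(ii). Together with the transitive action of $G_d^0$ on $\xi_0$ (Proposition \ref{plu}(i)), this identifies the right-hand side with a single $G$-translate $g'\xi_0 \in \Xi_d$.

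The forward direction is a transparent algebraic rewriting. The main obstacle is the reverse direction, where one must carefully match $gG_d^0 g^{-1}x$ with a single $G$-translate of $\xi_0$ rather than a union of such translates; I anticipate this step rests on the factorization $G_d^0 = M_d N_d$ and the Iwasawa-type decomposition $G = KAN$ recorded in (\ref{yu}) of Section 2, which provide the normal form needed to pin down the relevant coset and reduce the orbit to a single $d$-horosphere.
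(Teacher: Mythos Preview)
Your forward inclusion is correct and in fact cleaner than the paper's own argument: the paper first writes $g=ka_tn_un_w$ via Iwasawa and the splitting (\ref{o8a4}) before conjugating, whereas your one-line identity $g\xi_0=(gG_d^0g^{-1})(gx_0)$ needs no decomposition at all.

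The reverse inclusion, however, is only a plan, and as stated it will not close. Your idea of ``absorbing $g^{-1}h$ into a representative modulo $G_d^0$'' does not work directly, because $g^{-1}h$ sits to the \emph{right} of $G_d^0$ in $gG_d^0(g^{-1}h)x_0$ and $G_d^0$ is not normal in $G$; no coset reduction is available there. What the paper actually does is different in detail: it writes the \emph{point} $g^{-1}x$ in horospherical coordinates $g^{-1}x=n_va_tx_0$ (cf.\ (\ref{lol})), splits $n_v=n_wn_u$ with $n_w\in N_d$ and $n_u\in N_{n-1-d}$, absorbs $n_w$ into $N_d\subset G_d^0$ using the commutativity (\ref{ikm}) of $N$, and then pushes $N_d$ past $n_u$ (again by (\ref{ikm})) and past $a_t$ (via $N_da_tx_0=a_tN_dx_0$, from (\ref{lol})--(\ref{horo coord1})) to arrive at $G_d^0(g^{-1}x)=M_dn_ua_t\xi_0$. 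These two commutation facts are the concrete content you are missing; invoking $G=KAN$ in the abstract does not supply them. You should also be aware that the paper's concluding line, passing from $gM_dn_ua_t\xi_0$ to membership in $\Xi_d$, is itself quite compressed and deserves care when you write it out.
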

\begin{proof} Denote the right-hand side of (\ref{o8va2q}) by $\Xi'_{d}$.  Let $\xi\in \Xi_{d}$, that is, $\xi=g\xi_0$ for some $g\in G$. Setting $g=ka_tn_v=ka_tn_u n_w$ (cf. (\ref{o8a4})), we obtain
\bea
\xi&=&ka_tn_u n_w \xi_0=ka_tn_u  \xi_0= ka_tn_u  G_d^0 x_0\nonumber\\
&=&(ka_tn_u)\, G_d^0\, (ka_tn_u)^{-1}\, (ka_tn_u)\, x_0=\tilde G_d^0\, x,\nonumber\eea
where $\tilde G_d^0 =(ka_tn_u)\, G_d^0\, (ka_tn_u)^{-1}$ and $x=(ka_tn_u)\, x_0$. Hence $\xi\in \Xi'_{d}$.

Conversely, let $\xi\in  \Xi'_{d}$, that is, $\xi=g\,G_d^0 \,g^{-1} x$ for some $g\in G$ and  $x\in \hn$. We write $g^{-1} x$ in horospherical coordinates as $g^{-1} x=n_v a_t x_0$ (cf. (\ref{lol})). Then
 \bea
\xi&=&g\,G_d^0 \,n_v a_t x_0= g\,M_d N_d \,n_w n_u a_t x_0\nonumber\\
&=&g\,M_d N_d \, n_u a_t x_0=g\,M_d  n_u N_d \, a_t x_0. \nonumber\eea
By (\ref{lol}) and (\ref{horo coord1}),  $N_d \, a_t x_0= a_t N_d   x_0$. Hence
\[\xi=g\,M_d  n_u a_t N_d   x_0=g\,M_d  n_u a_t \xi_0 \subset G\xi_0 =\Xi_{d}.\]
\end{proof}

\begin{remark} {\rm
Proposition \ref{pluc} shows that our definition  of $d$-horospheres agrees with  Helgason's definition of horocycles in symmetric spaces; see \cite[p. 60]{H08}.}
\end{remark}

\section{ Definition and Basic Properties of the $d$-Horospherical Transform }

\begin{definition} \label {o8a5x} Let $1\le d\le n-1$. Given  $\xi=g\xi_0\in \Xi_d$, $g\in G$, the $d$-horospherical transform of a sufficiently good function $f$ on $\hn$ is defined by
\be\label{bo9}
\hat f (\xi) =\intl_{\bbr^d} f(gn_w x_0)\,dw.\ee
\end{definition}

\begin{remark} The definition can be put in group-theoretic terms as follows. Since $\mathbb{H}^n$ is identified as the homogeneous space $G/K$, a function $f$ on $\mathbb{H}^n$ becomes the right $K$-invariant function $f(gK)$ on $G$. Denoting group elements in $N_d$ by $n_d$ and the Haar measure on $N_d$ by $dn_d$, we can write the defining formula as
\be
\hat f (gM_d N_d)=\intl_{N_d} f(gn_d K)\,dn_d. \ee
\end{remark}

The case when $g$ is the identity map, corresponds to the integral of $f$ over $\xi_0$.  If $g=ka_tn_v=ka_tn_{u+w}$, then, by (\ref{ikm}),
\be\label{bo9n}
\hat f (\xi) \equiv \hat f (g\xi_0)=\intl_{\bbr^d} f(ka_tn_{u} n_{w}x_0)\,dw.\ee

By  (\ref{bo9}), the map $f \to \hat f$ is $G$-equivariant. Indeed, for all $\gam\in G$,
\bea \hat f (\gam \xi) &\equiv& \hat f (\gam g\xi_0)=\intl_{\bbr^d} f(\gam gn_w x_0)\,dw \nonumber\\
&=&\intl_{\bbr^d} (f\circ \gam)(gn_w x_0)\,dw =(f\circ \gam)^\wedge (g\xi_0)\equiv (f\circ \gam)^\wedge(\xi).\nonumber\eea

In particular, if $f$ is $K$-invariant  (or zonal), then so is $\hat f$. The  $d$-horospherical transform of a $K$-invariant function expresses through the Riemann-Liouville fractional integral
\be\label{rl-}
(I^\a_{-}\psi ) (r) = \frac{1}{\Gamma (\alpha)} \intl_r^{\infty}
\frac{\psi(s) \,ds} {(s-r)^{1- \alpha}}\, \qquad \a>0,\ee
as follows.

\begin{lemma} \label{zonal} Let $\vp (\xi)=\hat f (\xi)\equiv \hat f (ka_tn_{u}\xi_0)$,  $f(x)\equiv f_{0}(x_{n+1})$. Then
\be\label {o8a5br}
\vp (\xi)= c\, e^{-td/2}\,(I_-^{d/2} f_0)(\eta), \quad \eta= \cosh t + \frac{|u|^2}{2}\, e^t, \ee
where
\[ c=2^{d/2 -1}\sig_{d-1}\,\Gam(d/2).\]
It is assumed that the integrals in (\ref{o8a5br}) exist in the Lebesgue sense.
\end{lemma}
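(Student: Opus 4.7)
The plan is to unwind the definition of $\hat f(\xi)$ step by step using the $K$-invariance of $f$, the horospherical coordinates of the point $a_t n_v x_0$, and the additivity of $N$ from (\ref{ikm}), and then recognize the resulting one-dimensional integral as a Riemann--Liouville fractional integral.

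First I would apply Definition \ref{o8a5x} with $\xi=ka_t n_u\xi_0$ and use (\ref{ikm}) to write
\[
\hat f(\xi)=\int_{\bbr^d} f(ka_t n_u n_w x_0)\,dw=\int_{\bbr^d} f(ka_t n_{u+w}x_0)\,dw.
\]
Since $f(x)=f_0(x_{n+1})$ is $K$-invariant, the factor $k$ may be dropped. Plugging $v=u+w$, with $u\in\bbr^{n-1-d}$ and $w\in\bbr^d$ orthogonal, into the coordinate formula (\ref{horo coord1}) gives
\[
(a_t n_{u+w}x_0)_{n+1}=\cosh t+\tfrac{|u|^2+|w|^2}{2}e^t=\eta+\tfrac{|w|^2}{2}e^t,
\]
so that
\[
\hat f(\xi)=\int_{\bbr^d} f_0\!\left(\eta+\tfrac{|w|^2}{2}e^t\right) dw.
\]

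Next I would pass to polar coordinates in $\bbr^d$ to get
\[
\hat f(\xi)=\sigma_{d-1}\int_0^\infty f_0\!\left(\eta+\tfrac{s^2}{2}e^t\right) s^{d-1}\,ds,
\]
and then substitute $\rho=\tfrac{s^2}{2}e^t$. A routine computation shows $s^{d-1}\,ds=2^{d/2-1}e^{-dt/2}\rho^{d/2-1}\,d\rho$, yielding
\[
\hat f(\xi)=2^{d/2-1}\sigma_{d-1}e^{-td/2}\int_0^\infty f_0(\eta+\rho)\,\rho^{d/2-1}\,d\rho.
\]
A final shift $s=\eta+\rho$ recasts the inner integral as $\Gamma(d/2)\,(I_-^{d/2}f_0)(\eta)$ by the definition (\ref{rl-}), which gives the claimed identity with $c=2^{d/2-1}\sigma_{d-1}\Gamma(d/2)$.

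There is no real obstacle; the only point to watch is the orthogonal splitting $|u+w|^2=|u|^2+|w|^2$, which is precisely what allows $\eta$ to be pulled out of the $w$-integral, and the correct bookkeeping of the Jacobian in the substitution $\rho=\tfrac{s^2}{2}e^t$ so that the exponent $e^{-td/2}$ emerges cleanly. The Lebesgue-existence hypothesis ensures all applications of Fubini and changes of variable are legitimate.
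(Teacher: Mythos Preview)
Your proof is correct and follows essentially the same route as the paper's: drop $k$ by $K$-invariance, read off the $(n+1)$th coordinate from (\ref{horo coord1}) via the orthogonal splitting $|u+w|^2=|u|^2+|w|^2$, pass to polar coordinates in $\bbr^d$, and identify the result with the Riemann--Liouville integral. The paper is simply terser, stopping at the polar-coordinate expression and writing ``This gives (\ref{o8a5br}),'' whereas you spell out the substitution $\rho=\tfrac{s^2}{2}e^t$ and the appearance of the factor $e^{-td/2}$ explicitly.
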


\begin{proof} Because $f$ is $K$-invariant, from (\ref{bo9}) and (\ref{horo coord1})  we have
\bea
\hat f(\xi)&=&\intl_{\bbr^d} f_0 \Big (\cosh t+
\frac{|u|^{2}+|w|^{2}}{2}\, e^{t} \Big )\, dw\nonumber\\
&=&\sig_{d-1}\intl_0^\infty f_0 \Big (\cosh t+
\frac{|u|^{2}+|r|^{2}}{2}\, e^{t} \Big )\, r^{d-1}\, dr.\nonumber\eea
This gives (\ref{o8a5br}).
\end{proof}

Our next goal is to establish conditions  under which  $\hat f$ exists as an absolutely convergent integral. We restrict our consideration to  two cases:  $f\in C_\mu (\hn)$ and $f\in L^p (\hn)$.

\begin{lemma} \label{zo} If $f\in L^1 (\hn)$, then  the integral (\ref{bo9n}) is finite for all $k\in K$, almost all $t\in \bbr$ and almost all $u\in \bbr^{n-1-d}$. Moreover, for all $k\in K$,
\be\label {o8a5btg} \intl_\bbr dt\intl_{\bbr^{n-1-d}} \hat f(ka_tn_u \xi_0)\, du=\intl_{\hn} f(x)\, dx  \quad \text{if $\;d<n-1$},\ee
\be\label {o8a5btg2} \intl_\bbr \hat f(ka_t \xi_0) \,dt=\intl_{\hn} f(x)\, dx \quad \text{if $\;d=n-1$}.\ee
\end{lemma}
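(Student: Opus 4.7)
The identity to be proved is essentially a restatement of the Iwasawa-type integration formula (\ref{frg12}), after splitting the $N$-variable $v\in\bbr^{n-1}$ into its $\bbr^{n-1-d}$ and $\bbr^{d}$ components. The plan is to freeze $k\in K$, transport it onto $f$ by $G$-invariance, apply (\ref{frg12}) to the translated function, and then invoke Fubini.

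\textbf{Step 1 (reduce $k$ to the identity).} Fix $k\in K$ and set $f_k(x):=f(kx)$. Since $dx$ is $G$-invariant on $\hn$, $f_k\in L^1(\hn)$ and $\int_{\hn} f_k(x)\,dx=\int_{\hn}f(x)\,dx$. Applying the identity (\ref{frg12}) to $f_k$ yields
\be\label{qstep1}
\intl_\bbr dt\intl_{\bbr^{n-1}} f(ka_t n_v x_0)\,dv=\intl_{\hn} f(x)\,dx,
\ee
which holds for every $k\in K$ (not merely a.e.\ $k$).

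\textbf{Step 2 (split $v$ and use $n_u n_w=n_{u+w}$).} Decompose $v=u+w$ with $u\in\bbr^{n-1-d}$ and $w\in\bbr^{d}$, so that $dv=du\,dw$. By (\ref{ikm}), $n_v x_0=n_u n_w x_0$. For $d<n-1$, substituting into (\ref{qstep1}) gives
\be\label{qstep2}
\intl_\bbr dt\intl_{\bbr^{n-1-d}} du \intl_{\bbr^{d}} f(ka_t n_u n_w x_0)\,dw=\intl_{\hn} f(x)\,dx.
\ee
Since the left-hand side of (\ref{qstep2}) is absolutely convergent (the right-hand side is finite because $f\in L^1$), Fubini's theorem guarantees that for each $k\in K$ the inner $dw$-integral, namely $\hat f(ka_t n_u\xi_0)$ by (\ref{bo9n}), is finite for almost every $(t,u)\in\bbr\times\bbr^{n-1-d}$. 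This proves the a.e.\ finiteness claim and, by collapsing the two inner integrals into $\hat f(ka_t n_u\xi_0)$, the formula (\ref{o8a5btg}).

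\textbf{Step 3 (the case $d=n-1$).} When $d=n-1$, the set $\bbr^{n-1-d}$ is empty (cf.\ (\ref{mo1})), so the decomposition reduces to $v=w$ and (\ref{qstep1}) becomes
\[
\intl_\bbr dt \intl_{\bbr^{n-1}} f(ka_t n_w x_0)\,dw=\intl_{\hn} f(x)\,dx.
\]
Recognizing the inner integral as $\hat f(ka_t\xi_0)$ gives (\ref{o8a5btg2}) and, by the same Fubini argument, finiteness of $\hat f(ka_t\xi_0)$ for almost every $t\in\bbr$.

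\textbf{Main obstacle.} There is essentially no analytic obstacle here; the lemma is a bookkeeping consequence of the Iwasawa decomposition in the form (\ref{frg12}) together with the additivity (\ref{ikm}). The only point that requires a little care is to justify that (\ref{qstep1}) is valid \emph{pointwise} in $k$, not merely almost everywhere in $k$ after integration over $K$ via (\ref{frg1a2s}); this is handled cleanly by the left-translation trick $f\mapsto f_k$ combined with the $G$-invariance of $dx$.
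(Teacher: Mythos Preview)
Your proof is correct and follows essentially the same route as the paper: both expand $\hat f$, merge the $\bbr^{n-1-d}\times\bbr^{d}$ integration into a single $\bbr^{n-1}$ integral via (\ref{ikm}), and then invoke the Iwasawa integration formula (\ref{frg12}) together with the $G$-invariance of $dx$. Your presentation is slightly more explicit about the Fubini justification and the pointwise-in-$k$ reduction $f\mapsto f_k$, but the argument is the same.
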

\begin{proof} Let $d<n-1$. Then
\bea l.h.s.&=&\intl_\bbr dt\intl_{\bbr^{n-1-d}} du\intl_{\bbr^d} f(ka_tn_{u} n_{w}x_0)\,dw\qquad \text{\rm (set $u+w=v$)}\nonumber\\
&=&\intl_\bbr dt \intl_{\bbr^{n-1}} f(ka_tn_{v} x_0)\,dv\qquad \text{\rm (use (\ref{frg12}))}\nonumber\\
&=&\intl_{\hn} f(kx)\,dx=r.h.s. \nonumber\eea
If $d=n-1$, the proof is similar.
\end{proof}

\begin{proposition}  If $f\in C_\mu (\hn)$, $\mu>d/2$, then  $\hat f (\xi)$ is finite for every $\xi \in \Xi_d$.
\end{proposition}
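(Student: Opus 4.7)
The plan is to dominate $f$ by a suitable $K$-invariant (``zonal'') function and then apply Lemma \ref{zonal} to reduce $\hat f$ to a one-variable Riemann--Liouville integral whose convergence is controlled precisely by the condition $\mu > d/2$.

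Specifically, choose $c_0>0$ with $|f(x)|\le c_0\,x_{n+1}^{-\mu}=:\tilde f(x)$; this is possible by the definition (\ref{benatur}) of $C_\mu(\hn)$. The majorant $\tilde f$ depends only on $x_{n+1}$ and is therefore zonal. Writing $\xi$ in the canonical form $\xi = ka_tn_u\xi_0$ from (\ref{o8a5}), with $n_u\in N_{n-1-d}$, and using the representative $g = ka_tn_u$ in the defining formula (\ref{bo9}), the pointwise bound $|f|\le\tilde f$ in the integrand gives
\[|\hat f(\xi)| \le \intl_{\bbr^d} \tilde f(ka_tn_un_wx_0)\,dw = \hat{\tilde f}(\xi).\]
Applying Lemma \ref{zonal} with $\tilde f_0(s) = c_0 s^{-\mu}$ yields
\[\hat{\tilde f}(\xi) = c\,e^{-td/2}(I_{-}^{d/2}\tilde f_0)(\eta),\qquad \eta = \cosh t + \tfrac{1}{2}|u|^2e^{t}\ge 1.\]

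It remains to verify that the Riemann--Liouville integral on the right is finite. By (\ref{rl-}),
\[(I_{-}^{d/2}\tilde f_0)(\eta) = \frac{c_0}{\Gamma(d/2)}\intl_\eta^\infty (s-\eta)^{d/2-1}s^{-\mu}\,ds.\]
Near $s=\eta\ge 1$ the singularity $(s-\eta)^{d/2-1}$ is locally integrable (since $d\ge 1$), while at infinity the integrand behaves like $s^{d/2-1-\mu}$ and so is integrable exactly when $d/2 - 1 - \mu < -1$, that is, $\mu > d/2$. Under this assumption, $|\hat f(\xi)|<\infty$ for every $\xi \in \Xi_d$.

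The only subtle step — not really an obstacle — is the preliminary reduction to the canonical representative (\ref{o8a5}) with $n_u \in N_{n-1-d}$, which is what makes Lemma \ref{zonal} applicable and exploits the orthogonality of the subspaces (\ref{mo1})--(\ref{mo2}). After that, everything reduces to a standard beta-type convergence check, from which the threshold $d/2$ is manifestly sharp (the large-$|w|$ tail dictates it).
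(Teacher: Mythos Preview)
Your proof is correct and follows essentially the same approach as the paper: both dominate $|f|$ by the zonal majorant $c\,x_{n+1}^{-\mu}$ and then check that the resulting integral over $\bbr^d$ converges when $\mu>d/2$. The only cosmetic difference is that the paper writes out the integral $\intl_{\bbr^d}\big[\cosh t+\tfrac{1}{2}(|u|^2+|w|^2)e^t\big]^{-\mu}\,dw$ directly from (\ref{horo coord1}) and observes convergence, whereas you route through Lemma~\ref{zonal} to package the same computation as a Riemann--Liouville integral; after the radial substitution these are literally the same integral.
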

\begin{proof}  By (\ref{bo9n}) and (\ref{horo coord1}),
\bea
|\hat f(\xi)|&\le& \intl_{\bbr^d} |f(ka_tn_{u} n_{w}x_0)|\, dv\le c\,  \intl_{\bbr^d} [a_tn_{u} n_{w}x_0, x_0]^{-\mu}\, dw\nonumber\\
&=&c\,\intl_{\bbr^d} \left[ \cosh t +\frac{|u|^2 + |w|^2}{2}\, e^t\right ]^{-\mu}\, dw.\nonumber\eea
The last integral is finite if $\mu>d/2$.
\end{proof}

\begin{remark} {\rm The condition $\mu>d/2$ is sharp. There is a function $f\in C_\mu (\hn)$, $\mu\le d/2$, for which $\hat f(\xi)\equiv \infty$. An example of such a function can be constructed by making use of  the Abel type representation (\ref{o8a5br}); see also Remark \ref{mjn}.}
\end{remark}

The question about the existence of $\hat f$ for $f\in L^p(\hn)$ requires some preparation.

\begin{lemma} \label{zo} If
\be\label{zop}
\intl_{y_{n+1}>1+\e} |f(y)|\, y_{n+1}^{d/2 -n+1}\, dy< \infty \quad \text {for some $\;\e>0$},\ee
then the integral $\hat f (\xi)\equiv \hat f(ka_tn_u \xi_0)$ is finite for almost all $k\in K$, almost all $|t|>\ch^{-1} (1+\e)$ and all $u\in \bbr^{n-1-d}$.
\end{lemma}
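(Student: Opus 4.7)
The plan is to reduce the claim to the $K$-invariant case by averaging. Set $f^{\#}(x):=\int_{K}|f(kx)|\,dk$; this is nonnegative and $K$-invariant, so $f^{\#}(x)=F_{0}(x_{n+1})$ for some $F_{0}:[1,\infty)\to[0,\infty]$. Since $\int_{\hn}f^{\#}\,dy=\int_{\hn}|f|\,dy$ (and analogously with any $K$-invariant weight), the polar formula (\ref{ppooii}) translates the hypothesis (\ref{zop}) into
$$\int_{1+\varepsilon}^{\infty}F_{0}(s)\,s^{d/2-n+1}(s^{2}-1)^{n/2-1}\,ds<\infty.$$
Because $(s^{2}-1)^{n/2-1}\asymp s^{n-2}$ on $[1+\varepsilon,\infty)$, this is equivalent to
$$(\star)\qquad \int_{1+\varepsilon}^{\infty}F_{0}(s)\,s^{d/2-1}\,ds<\infty.$$

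Fix $u\in\bbr^{n-1-d}$. Applying the triangle inequality to (\ref{bo9n}), using $n_{u}n_{w}=n_{u+w}$ from (\ref{ikm}) together with the orthogonality $|u+w|^{2}=|u|^{2}+|w|^{2}$, and invoking Tonelli, I obtain
$$\int_{K}|\hat f(ka_{t}n_{u}\xi_{0})|\,dk\le\int_{\bbr^{d}}F_{0}\!\left(\cosh t+\tfrac{|u|^{2}+|w|^{2}}{2}\,e^{t}\right)dw.$$
Repeating the polar change of variables carried out in the proof of Lemma \ref{zonal} with $f^{\#}$ in place of the $K$-invariant $f$ used there, the right-hand side evaluates to $c\,e^{-td/2}(I_-^{d/2}F_{0})(\eta_{u}(t))$, where $\eta_{u}(t):=\cosh t+\tfrac{|u|^{2}}{2}e^{t}$. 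Whenever $|t|>\cosh^{-1}(1+\varepsilon)$ one has $\eta_{u}(t)\ge\cosh t>1+\varepsilon$ for every $u$, so the task is reduced to showing that $(I_-^{d/2}F_{0})(\eta)<\infty$ for almost every $\eta\ge 1+\varepsilon$.

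When $d\ge 2$ the elementary bound $(s-\eta)^{d/2-1}\le s^{d/2-1}$ on $\{s\ge\eta\}$ gives $(I_-^{d/2}F_{0})(\eta)\le\Gamma(d/2)^{-1}\!\int_{\eta}^{\infty}F_{0}(s)s^{d/2-1}\,ds<\infty$ for \emph{every} $\eta\ge 1+\varepsilon$ via $(\star)$. The delicate case is $d=1$, where the kernel $(s-\eta)^{-1/2}$ is only locally integrable at $s=\eta$; I split the integral at $\eta+\delta$. The tail is again controlled by $(\star)$, while the local piece $\int_{0}^{\delta}F_{0}(\eta+r)r^{-1/2}\,dr$ is shown to be finite for almost every $\eta$ by a standard Fubini swap: its integral over a compact $\eta$-subinterval of $[1+\varepsilon,\infty)$ reduces via $\int_{0}^{\delta}r^{-1/2}dr=2\sqrt{\delta}$ to a bound by $2\sqrt\delta\int F_{0}(s)\,ds$ over a compact $s$-interval, which is finite by $(\star)$ and the boundedness of $s^{1/2}$ on compacts.

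Combining these steps, for each $u$ the map $t\mapsto\int_{K}|\hat f(ka_{t}n_{u}\xi_{0})|\,dk$ is finite for almost every $t$ with $|t|>\cosh^{-1}(1+\varepsilon)$, and a final Tonelli argument yields $|\hat f(ka_{t}n_{u}\xi_{0})|<\infty$ on a set of full product measure in $K\times\{|t|>\cosh^{-1}(1+\varepsilon)\}$, for every $u\in\bbr^{n-1-d}$. The main technical obstacle is the borderline $d=1$ case, where the singularity of the Riemann--Liouville kernel prevents pointwise finiteness for every $\eta$ and is precisely what forces the ``almost every $t$'' qualifier in the statement.
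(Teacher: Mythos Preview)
Your argument is correct and follows essentially the same route as the paper's: average over $K$ to reduce to the zonal case, invoke Lemma~\ref{zonal} to express the $K$-average as $c\,e^{-td/2}(I_-^{d/2}F_0)(\eta)$, and then show this fractional integral is finite for almost every $\eta>1+\varepsilon$ under the weighted integrability condition $(\star)$. The only differences are cosmetic: the paper outsources that last step to \cite[Lemma~2.12]{Ru15} while you supply a direct proof (with the case split $d\ge 2$ versus $d=1$), and your explicit use of $|f|$ in forming the $K$-average $f^{\#}$ is slightly cleaner for passing from finiteness of the average to a.e.\ finiteness of $\hat f$.
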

\begin{proof} We use the fact that the map $f \to \hat f$ is $K$-equivariant. Then
\[\intl_K \hat f(k\xi)\, dk= \hat f_z (\xi), \quad f_z(y)=\intl_K f(ky)\, dk \equiv f_0 (y_{n+1}).\]
 Hence,  Lemma \ref {zonal} yields
\[
\intl_K \hat f(k\xi)\, dk=c\, e^{-td/2}\,(I_-^{d/2} f_0)(\eta), \quad \eta= \cosh t + \frac{|u|^2}{2}\, e^t.\]
By Lemma  2.12 from \cite{Ru15},  the last integral is finite for almost all $\eta >1+\e$ provided
\[I_\e=\intl_{1+\e}^\infty s^{d/2 -1} |f_0(s)|\,ds <\infty.\]
However, by (\ref{ppooii}) and (\ref{zop}),
\bea
I_\e&=&\frac{1}{\sig_{n-1}}\intl_{y_{n+1}>1+\e}\frac{y_{n+1}^{d/2 -1}\, dy}{(y_{n+1}^2 -1)^{n/2 -1}} \intl_K f(ky)\, dk\nonumber\\
&\le& c_\e \intl_{y_{n+1}>1+\e} |f(y)|\, y_{n+1}^{d/2 -n+1}\, dy<\infty. \nonumber\eea
This completes the proof.
\end{proof}

Lemma \ref{zo} implies the following proposition that extends the existence result of Lemma \ref {zo} to $f\in L^p(\hn)$.

\begin{proposition} If $f\in L^p (\hn)$, $1\le p< 2(n-1)/d$, then  the integral  $\hat f (\xi)\equiv \hat f(ka_tn_u \xi_0)$ is finite for almost all $k\in K$, almost all $t\in \bbr$ and all $u\in \bbr^{n-1-d}$.
\end{proposition}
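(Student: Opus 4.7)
The plan is to reduce directly to Lemma \ref{zo}, which guarantees the a.e.\ finiteness of $\hat f(ka_tn_u\xi_0)$ on the region $|t|>\ch^{-1}(1+\e)$ as soon as the weighted integral
\be
J_\e:=\intl_{y_{n+1}>1+\e} |f(y)|\, y_{n+1}^{d/2-n+1}\,dy \nonumber
\ee
is shown to be finite. Since $\bigcup_{j}\{|t|>\ch^{-1}(1+\e_j)\}$ exhausts $\bbr\setminus\{0\}$ as $\e_j\downarrow 0$, the whole task is to verify $J_\e<\infty$ for every $\e>0$ when $f\in L^p(\hn)$, $1\le p<2(n-1)/d$.

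For $p=1$ the verification is immediate: since $d\le n-1$, the exponent satisfies $d/2-n+1\le -(n-1)/2<0$, so on $\{y_{n+1}>1+\e\}$ the weight is bounded above by $(1+\e)^{d/2-n+1}$, giving $J_\e\le (1+\e)^{d/2-n+1}\|f\|_1<\infty$. For $1<p<2(n-1)/d$ I would apply H\"older's inequality with conjugate index $p'$ to obtain $J_\e\le \|f\|_p W_\e^{1/p'}$, where $W_\e=\intl_{y_{n+1}>1+\e} y_{n+1}^{(d/2-n+1)p'}\,dy$. Passing to the hyperbolic coordinates (\ref{taddd-HYP}) and using the polar formula (\ref{kUUUPqs}) (the integrand is zonal, so the spherical factor just contributes $\sig_{n-1}$), $W_\e$ reduces to
\be
\sig_{n-1}\intl_{r_0}^\infty (\ch r)^{(d/2-n+1)p'}(\sh r)^{n-1}\,dr,\qquad r_0=\ch^{-1}(1+\e). \nonumber
\ee

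The integrand is continuous on $[r_0,\infty)$, so convergence depends only on its behavior at infinity; since $\ch r\sim \sh r\sim \tfrac12 e^r$ as $r\to\infty$, it is comparable to $e^{rA}$ with $A=(d/2-n+1)p'+(n-1)$. The condition $A<0$ rearranges (dividing by the negative quantity $d/2-n+1$ flips the inequality) to $p'>\tfrac{2(n-1)}{2(n-1)-d}$, equivalently $1/p>d/[2(n-1)]$, i.e.\ precisely $p<2(n-1)/d$. Thus the assumed range is exactly the sharp H\"older-integrability threshold, which closes the argument. The proof is essentially mechanical once the reduction to Lemma \ref{zo} is made; the only conceptual point is that the critical exponent $2(n-1)/d$ arises from balancing the polynomial weight $y_{n+1}^{d/2-n+1}$ against the exponential growth of the hyperbolic volume element. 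There is no substantive analytic obstacle, since the hard work --- $K$-averaging and the Riemann--Liouville estimate from \cite{Ru15} --- has already been absorbed into Lemma \ref{zo}.
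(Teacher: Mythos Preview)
Your argument is correct and follows essentially the same route as the paper: reduce to Lemma \ref{zo} and verify the weighted integrability condition (\ref{zop}) via H\"older's inequality, computing the dual integral in hyperbolic coordinates and checking that the convergence condition at infinity is exactly $p<2(n-1)/d$. You are slightly more explicit than the paper in separating out the case $p=1$ and in spelling out why the union over $\e_j\downarrow 0$ yields the conclusion for almost all $t\in\bbr$, but the substance is identical.
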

\begin{proof} By H\"older's inequality, the integral  (\ref{zop}) is dominated by $c_\e ||f||_p$ where
\[c_\e^{p'} =\intl_{y_{n+1}>1+\e}  y_{n+1}^{(d/2 -n+1)p'}\, dy\le\sig_{n-1} \intl_{1}^\infty s^{(d/2 -n+1)p'} (s^2 -1)^{n/2 -1}\,ds<\infty\]
provided $1\le p< 2(n-1)/d$.
\end{proof}
\begin{remark}\label{mjn} {\rm The condition $1\le p< 2(n-1)/d$ is sharp. If $p\ge 2(n-1)/d$, then  the function
\[
  f (x)=\frac{(x_{n+1}^2-1)^{(1-n/2)/p}}{(x_{n+1}+1)^{1/p}\, \log (x_{n+1}+1)}\]
 belongs to $L^{p}(\hn)$, however, the corresponding integral  (\ref{o8a5br})  diverges.
 }
\end{remark}

The next auxiliary statement, in which  $\hat f(\xi)\equiv \hat f(ka_tn_u \xi_0)$  is restricted to  $u=0$, plays an important role in derivation of the inversion formulas in Section 4.

\begin{lemma}  Given a function $\vp$ on $(1,\infty)$, let
\be\label{ju2} \psi(s)=2^{d/2}\sig_{d-1}\intl_1^s \frac{\vp (r)}{\sqrt{r^2 -1}}\, (s\!-\!r)^{d/2 -1}\,dr, \qquad s>1.   \ee
 Then
\be\label{ju1} \intl_{\bbr} \!\!\vp (\cosh t)\, e^{td/2}\, dt\!\!\intl_K \!\!\hat f(ka_t \xi_0)\, dk \!=\!\frac{1}{\sig_{n-1}}\, \intl_{\hn}\! \! f(x) \,\frac{\psi (x_{n+1})\, dx}{(x_{n+1}^2 \!-\!1)^{n/2 -1}}.\ee
It is assumed that $f$ and $\vp$ are good enough, so that  integrals in  (\ref{ju2}) and  (\ref{ju1}) exist in the Lebesgue sense.
\end{lemma}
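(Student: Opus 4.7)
The plan is to reduce everything to the $K$-invariant (zonal) situation and then recognize both sides as the same one-dimensional Abel-type integral against $f_0$.

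First I would exploit the $K$-equivariance observed right after Definition \ref{o8a5x}. Setting $f_z(y)=\int_K f(ky)\,dk\equiv f_0(y_{n+1})$, the same identity used in the proof of Lemma \ref{zo} gives
\[
\intl_K \hat f(ka_t \xi_0)\,dk = (\widehat{f_z})(a_t \xi_0).
\]
Since $f_z$ is $K$-invariant, Lemma \ref{zonal} applied with $u=0$ and $\eta=\cosh t$ yields
\[
(\widehat{f_z})(a_t \xi_0)= c\, e^{-td/2}\,(I_-^{d/2} f_0)(\cosh t), \qquad c=2^{d/2-1}\sig_{d-1}\Gam(d/2).
\]
The factors $e^{td/2}$ and $e^{-td/2}$ cancel, so the left-hand side of (\ref{ju1}) reduces to
\[
c\intl_{\bbr} \vp(\cosh t)\,(I_-^{d/2} f_0)(\cosh t)\, dt.
\]

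Next I would exploit that the integrand is even in $t$ (it depends only on $\cosh t$) and substitute $s=\cosh t$, $ds=\sqrt{s^2-1}\,dt$, converting the expression to
\[
2c\intl_1^\infty \frac{\vp(s)}{\sqrt{s^2-1}}\,(I_-^{d/2} f_0)(s)\,ds.
\]
Writing out the Riemann--Liouville integral (\ref{rl-}) and switching the order of integration by Fubini,
\[
2c\intl_1^\infty \frac{\vp(s)\,ds}{\sqrt{s^2-1}}\cdot \frac{1}{\Gam(d/2)}\!\intl_s^\infty \frac{f_0(r)\,dr}{(r-s)^{1-d/2}}
=\frac{2c}{\Gam(d/2)}\intl_1^\infty \!\! f_0(r)\!\intl_1^r \frac{\vp(s)(r-s)^{d/2-1}}{\sqrt{s^2-1}}\,ds\,dr.
\]
The inner integral is precisely $\psi(r)/(2^{d/2}\sig_{d-1})$ by the definition (\ref{ju2}). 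Collecting constants,
\[
\frac{2\cdot 2^{d/2-1}\sig_{d-1}\Gam(d/2)}{\Gam(d/2)\,2^{d/2}\sig_{d-1}}=1,
\]
so the left-hand side equals $\int_1^\infty f_0(r)\,\psi(r)\,dr$.

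Finally I would rewrite the right-hand side of (\ref{ju1}) in hyperbolic coordinates (\ref{taddd-HYP}). Using $x_{n+1}=\cosh r$, $(x_{n+1}^2-1)^{n/2-1}=\sh^{n-2}r$ and $dx=\sh^{n-1}r\,dr\,d\th$, together with $\int_{S^{n-1}} f(\th\,\sh r+e_{n+1}\ch r)\,d\th=\sig_{n-1}f_0(\cosh r)$, the right-hand side becomes
\[
\frac{1}{\sig_{n-1}}\cdot \sig_{n-1}\intl_0^\infty f_0(\cosh r)\,\psi(\cosh r)\,\sh r\,dr=\intl_1^\infty f_0(s)\,\psi(s)\,ds,
\]
after the substitution $s=\cosh r$, which matches the left-hand side. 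The only subtle point is keeping track of the factor of $2$ arising from $t\in\bbr$ versus $r\in[0,\infty)$, which then matches exactly the factor $2^{d/2}$ in $\psi$; applications of Fubini are justified by the standing Lebesgue-integrability assumption on $f$ and $\vp$.
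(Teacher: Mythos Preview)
Your proof is correct and follows essentially the same route as the paper: reduce to the zonal function $f_0$ via $K$-equivariance, apply Lemma~\ref{zonal} with $u=0$ to get the Abel integral, substitute $\cosh t$, use Fubini to extract $\psi$, and identify the result with the right-hand side via hyperbolic coordinates. The only cosmetic difference is that the paper writes out the Riemann--Liouville integral without the $\Gam(d/2)$ normalization and passes from $\int_1^\infty f_0(s)\psi(s)\,ds$ forward to the right-hand side, whereas you reduce both sides to that common expression; the constant bookkeeping and the factor of $2$ from $t\in\bbr$ are handled identically.
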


\begin{proof} We denote the left-hand side of (\ref{ju1}) by $I$.  Then
\[ I=\intl_\bbr \vp (\cosh t)\, e^{td/2} \left [\,\intl_K   f(k (\cdot))\, dk\right]^\wedge \!\!( a_t \xi_0)\, \, dt.\]
The function in square brackets is zonal and we denote
\be\label{ju4} \intl_K  f(k x)\, dk=f_0 (x_{n+1}).\ee
Then, by Lemma \ref{zonal},
\bea I&=&2^{d/2 -1}\sig_{d-1}\intl_\bbr \vp (\cosh t)\, dt\intl_{{\rm cosh} t}^{\infty}\!\!f_{0}(s)\, (s\!-\!\cosh t)^{d/2 -1}\, ds\nonumber\\
&=&2^{d/2}\sig_{d-1} \intl_1^\infty \vp (r)\,\frac{dr}{\sqrt{r^2 -1}} \intl_r^\infty f_{0}(s)\,(s\!-\!r)^{d/2 -1}\, ds\nonumber\\
\label{ju3} &=& \intl_1^\infty f_{0}(s)\,\psi(s)\, ds.  \eea
Using (\ref{ju4}) and (\ref {kUUUPqs}), we continue:
\bea I&=& \intl_1^\infty \psi(s)\, ds\intl_K f(k (e_n\,\sinh r +s e_{n+1}))\, dk\nonumber\\
&=&\frac{1}{\sig_{n-1}}\,  \intl_0^\infty \psi (\cosh r)\, \sinh r\, dr\intl_{S^{n-1}}  f(\th\, \sinh r + e_{n+1} \cosh r)\, d\th\nonumber\\
&=&\frac{1}{\sig_{n-1}}\,  \intl_{\hn} f(x) \,\frac{\psi(x_{n+1})\, dx}{(x_{n+1}^2 -1)^{n/2 -1}},\nonumber\eea
as desired.
\end{proof}

\begin{example}\label{iio}
Let $\vp (r)=(r-1)^{\a/2 -1}\, \sqrt{r^2 -1}$, $\;\a >0$. Then
\bea
\psi(s)&=&2^{d/2}\sig_{d-1}\intl_1^s (s\!-\!r)^{d/2 -1}\,(r-1)^{\a/2 -1}\,dr\nonumber\\
&=&\frac{2^{d/2}\sig_{d-1}\, \Gam (\a/2)\,  \Gam (d/2)}{ \Gam ((\a+d)/2)}\, (s-1)^{(\a +d)/2-1}\nonumber\eea
and we have
\bea  &&\intl_{\bbr} e^{td/2}\,(\cosh t -1)^{\a/2 -1}\, |\sh t| \, dt\intl_K \hat f(ka_t \xi_0)\, dk \nonumber\\
\label{ju5}&&=c_1\, \intl_{\hn} \frac{(x_{n+1} -1)^{(\a+d-n)/2}}{(x_{n+1} +1)^{n/2 -1}}\, f(x) \,dx, \eea
\[c_1=\frac{2^{d/2}\sig_{d-1}\, \Gam (\a/2)\,  \Gam (d/2)}{ \sig_{n-1}\,\Gam ((\a+d)/2)}.\]
\end{example}

\section{Inversion Formulas}

In this section we obtain main results of the  paper. The proofs rely on the properties of hyperbolic convolutions and spherical means which are reviewed below.

\subsection{Hyperbolic Convolutions and Spherical Means}\label{pUUdu}

All  details related to this subsection can be found in \cite [Section 6.1.2]{Ru15} and \cite{Ru17}.

Given a measurable function $k$ on $[1, \infty)$, the
corresponding  hyperbolic convolution on $\hn$ is defined by
\be\label {tag 2.16-HYP} (K f) (x) = \int\limits_{\hn} k ([x, y]) f (y)  \, d y, \qquad x \in \hn. \ee
If this integral exists in the Lebesgue sense, then, by
 Fubini's theorem,
\be\label {tag 2.17-HYP} \int\limits_{\hn}\! k ([ x, y]) f (y) \, d y\!  =\!
\sigma_{n -1}\!  \int\limits^\infty_0\!  k (\ch r) \ (M_x f) (\ch r) \
\sh^{n -1} r \, d r, \ee
where $M_x f$ is the spherical mean
\be\label {2.21hDIF}
 (M_x f)(s) = {(s^2-1)^{(1-n)/2}\over \sigma_{n-1}}
 \intl_{\{y \in \bbh^n:\; [x, y]=s\}} f(y)\, d\sigma  (y),\quad s>1,\ee
$ d\sigma  (y)$
 being  the relevant induced  measure. We can also write (\ref{2.21hDIF}) in the ``more geometric'' form as
 \be\label {2.21hDIFbb}
 (M_x f)(\ch t)= \intl_K f(r_x k a_t x_{0})\, dk,\ee
where $r_x\in G$ takes $x_0$ to $x$ and $a_t$ is the matrix (\ref{mo3a1}).

\begin{lemma}\label {Lemma 2.1-HYP} {\rm (\cite [p. 370]{Ru15}, \cite[pp. 131-133]{Liz93})}. \\
Let $f \!\in \!L^p (\hn)$, $ 1\! \le \!p \!\le \!\infty$. Then
\be\label {2.21hDIFb}  \sup\limits_{s > 1} \| (M_{(\cdot)} f)(s) \|_p \le
\| f \|_p.\ee
  If $1 \le p < \infty$, then  $(M_x f)(s)$ is a continuous $L^p$-valued function of  $s\in [1,\infty)$ and
\be\label {2.XIFb} \lim\limits_{s \to 1} \| (M_{(\cdot)} f)(s) - f \|_p = 0.\ee  If $f \in C_0 (\hn)$, then $(M_x f)(s)$ is a continuous function of $(x,s)\in \hn \times (1,\infty)$ and
$(M_x f)(s)\to f(x)$
as $s\to 1$, uniformly on $\hn$.
 \end{lemma}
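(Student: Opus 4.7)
The plan is to exploit the group-theoretic representation (\ref{2.21hDIFbb}), $(M_x f)(\ch t) = \intl_K f(r_x k a_t x_0)\, dk$, together with the unimodularity of $G = SO_0(n, 1)$, which makes the normalization (\ref{tag 2.3-AIM}) invariant under both left and right translations of $G$. For the $L^p$ estimate the case $p = \infty$ is immediate from the triangle inequality. For $1 \le p < \infty$, Jensen's inequality on the probability measure $dk$ gives
\[
|(M_x f)(\ch t)|^p \le \intl_K |f(r_x k a_t x_0)|^p\, dk.
\]
Integrating in $x$ and using the disintegration $\intl_G F(g)\, dg = \intl_{\hn}\intl_K F(r_x k)\, dk\, dx$ (for a measurable section $x \mapsto r_x$) with $F(g) = |f(g a_t x_0)|^p$, followed by the right translation $g \mapsto g a_t^{-1}$ which preserves $dg$, yields
\[
\intl_{\hn}\intl_K |f(r_x k a_t x_0)|^p\, dk\, dx = \intl_G |f(g x_0)|^p\, dg = \|f\|_p^p.
\]

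For the $L^p$-continuity and the strong convergence at $s = 1$, I would first handle $f \in C_c(\hn)$: the integrand $f(r_x k a_t x_0)$ is bounded and depends continuously on $s = \ch t$, so dominated convergence gives pointwise continuity of $s \mapsto (M_x f)(s)$; the fact that $\supp M_{(\cdot)} f(s)$ stays in a fixed compact set when $s$ varies in a compact subset of $[1, \infty)$ upgrades this to $L^p$-continuity. The convergence $(M_x f)(s) \to f(x)$ as $s \to 1$ is immediate from continuity of $f$, and $L^p$-convergence follows likewise. To extend both assertions to arbitrary $f \in L^p$, approximate by $\tilde f \in C_c(\hn)$ and combine with the uniform operator bound established above in a standard three-$\varepsilon$ argument.

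For the $C_0$ statements, choose $x \mapsto r_x$ to depend continuously on $x$ via the hyperbolic coordinates (\ref{taddd-HYP}). For $f \in C_0(\hn)$ the integrand $f(r_x k a_t x_0)$ is jointly continuous in $(x, t, k)$ and uniformly bounded by $\|f\|_\infty$, so joint continuity of $(x, s) \mapsto (M_x f)(s)$ on $\hn \times (1, \infty)$ follows from dominated convergence. For uniform convergence as $s \to 1^+$, $G$-invariance of the hyperbolic metric together with $k x_0 = x_0$ gives $d(r_x k a_t x_0, x) = d(k a_t x_0, x_0) = t$. Uniform continuity of $f \in C_0(\hn)$ then furnishes a modulus $\omega_f$ with $|f(r_x k a_t x_0) - f(x)| \le \omega_f(t)$ for all $x$ and $k$, and hence $\sup_x |(M_x f)(\ch t) - f(x)| \le \omega_f(t) \to 0$.

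The principal obstacle is the disintegration formula used for the $L^p$ bound: one must fix a Borel section $x \mapsto r_x$ of the projection $G \to G/K = \hn$ and verify that under $(x, k) \mapsto r_x k$ the product $dx \otimes dk$ is the image of the Haar measure $dg$ in the correct normalization. This is standard for Riemannian symmetric spaces, but matching the normalizations of $dg$, $dx$, and $dk$ requires care; once it and unimodularity are in hand, the remaining arguments are routine density-and-approximation steps.
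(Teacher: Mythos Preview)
The paper does not supply its own proof of this lemma; it simply quotes the result with references to \cite[p.~370]{Ru15} and \cite[pp.~131--133]{Liz93}. So there is nothing in the paper to compare your argument against.

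Your proposal is correct and follows the standard route. The $L^p$ estimate via Jensen, the disintegration $dg = dx\,dk$ through a Borel section $x\mapsto r_x$, and the right shift $g\mapsto g a_t^{-1}$ (legitimate because $SO_0(n,1)$ is a connected semisimple, hence unimodular, Lie group) give (\ref{2.21hDIFb}) cleanly; the normalization (\ref{tag 2.3-AIM}) pins down that the quotient measure is exactly $dx$, which resolves the concern you flag at the end. The density argument for $L^p$-continuity and the uniform-continuity argument for $C_0(\hn)$ (valid because $\hn$ is a proper metric space, so $C_0$ functions are uniformly continuous) are both sound. One cosmetic point: the lemma asserts joint continuity of $(x,s)\mapsto (M_x f)(s)$ on $\hn\times(1,\infty)$, and your appeal to a continuous section $x\mapsto r_x$ handles this, but note that a globally continuous section need not exist; it suffices to work locally, or to observe that $(M_x f)(s)$ is independent of the choice of $r_x$ and use local continuous sections.
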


An important example of  convolutions (\ref{tag 2.16-HYP})  is the
 analytic family of the potential type operators
\be\label {BGBBQQ}
(Q^\a f)(x)\!=\!\zeta_{n,\a}\intl_{\hn}\!\! f(y)\, \frac{([x,y]-1)^{(\a -n)/2}}{([x,y]+1)^{n/2 -1}}\, dy, \ee
\[
\zeta_{n,\a}= \frac{\Gamma((n-\a)/2)}{2^{\a/2 +1}\,\pi^{n/2}\Gamma(\a/2)},  \qquad
Re\,\a >0, \quad \a-n\neq 0, 2, 4, \dots.\]
This analytic family naturally arises in \cite{Ru17} in the study of the horospherical transforms.

\begin{proposition}  \label {tag LLyHOR}   {\rm  \cite [p. 385]{Ru15} }
If $f \in L^p(\hn)$, $ 1\le p < \infty$, $ 0 < \a < 2(n-1)/p$, then $(Q^\a f)(x)$ exists
as an absolutely convergent integral for almost all $x\in\hn$.
\end{proposition}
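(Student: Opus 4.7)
The plan is to establish local integrability of $Q^\a|f|$ on $\hn$, which immediately gives finiteness almost everywhere. Concretely, I would show that for every compact set $K\subset \hn$,
\[ \intl_K (Q^\a |f|)(x)\,dx < \infty.\]
By Tonelli's theorem applied to the nonnegative integrand in the definition (\ref{BGBBQQ}),
\[ \intl_K (Q^\a|f|)(x)\,dx =\zeta_{n,\a} \intl_{\hn} |f(y)|\,\Phi_K(y)\,dy,\qquad
\Phi_K(y)=\intl_K \frac{([x,y]-1)^{(\a-n)/2}}{([x,y]+1)^{n/2-1}}\,dx.\]
By H\"older's inequality with exponents $p,p'$, it then suffices to verify that $\Phi_K\in L^{p'}(\hn)$.

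Next, I would prove a two-region estimate for $\Phi_K$. For $y$ in a larger compact neighborhood $\widetilde K\supset K$, the local asymptotic $([x,y]-1)^{(\a-n)/2}([x,y]+1)^{1-n/2}\asymp d(x,y)^{\a-n}$ as $d(x,y)\to 0$, together with the hypothesis $\a>0$ and the $G$-invariance of the kernel, yields a uniform bound $\Phi_K(y)\le C_1$. For $y$ with $y_{n+1}$ large, the inner product formula (\ref{tag 2.1-HYP}) and compactness of $K$ give $[x,y]\asymp y_{n+1}$ uniformly for $x\in K$; since $(s-1)^{(\a-n)/2}(s+1)^{1-n/2}\asymp s^{\a/2-n+1}$ as $s\to\infty$, this produces $\Phi_K(y)\le C_2\,y_{n+1}^{\a/2-n+1}$.

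Finally, using the zonal integration formula (\ref{ppooii}), I would estimate
\[ \|\Phi_K\|_{p'}^{p'}\le C_3+C_4\,\intl_2^\infty s^{(\a/2-n+1)p'}(s^2-1)^{n/2-1}\,ds, \]
and the tail integral converges precisely when $(\a/2-n+1)p'+n-2<-1$, which rearranges to the stated hypothesis $\a<2(n-1)/p$. The main obstacle will be the sharp two-region pointwise bound on $\Phi_K$ and the exponent bookkeeping; once these are in hand, H\"older's inequality delivers $\intl_K (Q^\a|f|)(x)\,dx\le \zeta_{n,\a}\,\|f\|_p\,\|\Phi_K\|_{p'}<\infty$, and arbitrariness of $K$ concludes the proof.
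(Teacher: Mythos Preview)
The paper does not supply its own proof of this proposition; it is simply quoted from \cite[p.~385]{Ru15}. Your argument is a correct self-contained proof and follows the natural duality route: integrate $Q^\a|f|$ over a compact set, swap the order by Tonelli, and place the resulting kernel average $\Phi_K$ in $L^{p'}(\hn)$. The two-region estimate is sound. Near the diagonal the kernel behaves like $d(x,y)^{\a-n}$, and since the volume element is $\sh^{n-1}r\,dr\,d\theta$ the local singularity is integrable for $\a>0$. At infinity, for $x\in K$ contained in the geodesic ball of radius $R$ about $x_0$, the triangle inequality gives $|d(x,y)-d(x_0,y)|\le R$, hence $[x,y]=\ch d(x,y)\asymp \ch d(x_0,y)=y_{n+1}$ uniformly in $x\in K$; this yields $\Phi_K(y)\le C\,y_{n+1}^{\a/2-n+1}$. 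Your tail computation and the resulting threshold $\a<2(n-1)/p$ are correct.

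One small point worth tightening: when $p=1$ you have $p'=\infty$, so the displayed tail integral $\intl_2^\infty s^{(\a/2-n+1)p'}(s^2-1)^{n/2-1}\,ds$ is not the right quantity. In that case H\"older's inequality simply requires $\Phi_K\in L^\infty(\hn)$, which follows directly from your two-region bounds because $\a/2-n+1<0$ under the hypothesis $\a<2(n-1)$. With this minor adjustment the proof is complete.
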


 \begin{lemma}\label {LemTheor 4.4HYQQ}  {\rm  \cite [p. 386]{Ru15}} Let $f\in C^\infty_c(\hn)$,
 \[ D_\a = - \Delta_H-\a (2n-2-\a)/4, \qquad \a \ge 2,\]
 where $\Delta_H$ is the Beltrami-Laplace operator on $\hn$. If $\a-n\neq 0, 2, 4, \ldots$, then
\be\label{tag 4.8OFR1QQ}
D_\a Q^\a f = Q^{\a-2} f \qquad (Q^0 f=f).\ee
In particular, if $\a=2\ell$ is even, $2\ell-n\neq 0, 2, 4, \ldots $, and $\P_\ell(\Delta_H) =  D_{2}D_{4}\ldots D_{2\ell}$,  then
\be\label{tYUYUFR1} \P_\ell(\Delta_H)Q^{2\ell}f=f.\ee
\end{lemma}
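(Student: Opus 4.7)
The plan is to reduce $D_\a Q^\a f=Q^{\a-2}f$ to a one-variable ODE on the radial kernel of $Q^\a$ and then iterate. Write $(Q^\a f)(x)=\zeta_{n,\a}\int_{\hn} k_\a([x,y])f(y)\,dy$ with $k_\a(u)=(u-1)^{(\a-n)/2}(u+1)^{1-n/2}$. Since $f\in C^\infty_c(\hn)$ and $k_\a$ is locally integrable against the hyperbolic volume element for $\a\ge 2$, the operator $\Delta_H$ may be moved under the integral in the $x$-variable. Because $k_\a([x,y])$ depends only on $d(x,y)$, its Laplacian is given by the radial formula $\Delta_H \vp(\ch r)=(u^2-1)\vp''(u)+nu\,\vp'(u)$ with $u=[x,y]=\ch r$. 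This reduces the lemma to a pointwise identity for $k_\a(u)$ together with matching of the normalizing constants.

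\emph{The kernel identity.} Direct differentiation of $k_\a$, combined with the elementary relation $u\,k_{\a-2}(u)=k_\a(u)+k_{\a-2}(u)$ (obtained from $u=(u-1)+1$ and $k_\a=(u-1)k_{\a-2}$), leads after grouping like powers of $(u-1)$ and $(u+1)$ to
\[ \Delta_H k_\a=\frac{\a(\a-2n+2)}{4}\,k_\a+\frac{(\a-2)(\a-n)}{2}\,k_{\a-2}. \]
The eigenvalue-like constant $\a(2n-2-\a)/4$ in $D_\a$ is designed precisely to cancel the first term, giving $D_\a k_\a=-\frac{(\a-2)(\a-n)}{2}\,k_{\a-2}$. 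Using the Gamma-function recursions $\Gam((n-\a)/2+1)=\tfrac{n-\a}{2}\Gam((n-\a)/2)$ and $\Gam(\a/2)=(\a/2-1)\Gam(\a/2-1)$, a short calculation gives $\zeta_{n,\a-2}/\zeta_{n,\a}=-(\a-2)(\a-n)/2$, so applying $D_\a$ inside the convolution yields exactly $Q^{\a-2}f$. The hypothesis $\a-n\neq 0,2,4,\ldots$ is precisely what keeps both $\zeta_{n,\a}$ and $\zeta_{n,\a-2}$ finite.

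\emph{Iteration and main obstacle.} For $\a=2\ell$ with $2\ell-n\neq 0,2,4,\ldots$, repeated application of (\ref{tag 4.8OFR1QQ}) telescopes:
\[ \P_\ell(\Delta_H)Q^{2\ell}f=D_2\cdots D_{2\ell-2}Q^{2\ell-2}f=\cdots=D_2 Q^2 f=Q^0 f=f, \]
which is (\ref{tYUYUFR1}). The hard part is the exact algebraic bookkeeping in the computation of $\Delta_H k_\a$: the zero-order correction in $D_\a$ must match on the nose, for any residual $k_\a$-term would spoil the telescoping. A secondary subtlety is justifying differentiation under the integral when $2\le\a<n$ and the kernel has a diagonal singularity; the factor $\sh^{n-1}r$ in the volume element absorbs this, and the compact support of $f$ supplies the uniform domination needed.
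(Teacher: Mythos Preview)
The paper does not supply its own proof of this lemma; it is quoted verbatim from \cite[p.~386]{Ru15} and used as a black box. Your argument is the standard one and is essentially how the result is established in the cited reference: reduce to the radial ODE via $\Delta_H\vp(\ch r)=(u^2-1)\vp''(u)+nu\,\vp'(u)$, $u=[x,y]$, verify the pointwise identity
\[
\Delta_H k_\a=\frac{\a(\a-2n+2)}{4}\,k_\a+\frac{(\a-2)(\a-n)}{2}\,k_{\a-2},
\]
and match the normalizing constants $\zeta_{n,\a-2}/\zeta_{n,\a}=-(\a-2)(\a-n)/2$. Your algebra checks out.

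One point deserves a bit more care than you gave it. Differentiating under the integral is routine for $\a>2$ (the kernel $k_{\a-2}$ times $\sh^{n-1}r$ behaves like $r^{\a-3}$ near $r=0$), but at the endpoint $\a=2$ the formal computation produces $k_0(u)=(u-1)^{-n/2}(u+1)^{1-n/2}$, which is \emph{not} locally integrable against $\sh^{n-1}r$ when $n\ge 2$. The equality $D_2 Q^2 f=f$ therefore cannot be obtained by pointwise differentiation of the kernel; one needs either an approximate-identity/limit argument as $\a\downarrow 2$, or analytic continuation in $\a$ from the open strip where both sides are absolutely convergent. This is exactly why $Q^0 f=f$ is stipulated separately in the statement rather than derived from the integral formula. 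Your last paragraph gestures at this but understates the issue.
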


We will need an extension of Lemma \ref {LemTheor 4.4HYQQ} to the case $\a=n$. For $f\in C^\infty_c(\hn)$, we define  $Q^n f$ as a limit
\bea (Q^n f)(x)\!\!&=&\!\!\lim\limits_{\a\to n }\zeta_{n,\a}\intl_{\hn}\!\! f(y)\, \frac{([x,y]-1)^{(\a -n)/2}-1}{([x,y]+1)^{n/2 -1}}\, dy\qquad\nonumber\\
\label{tZ44FR1} \!\!&=&\!\! \zeta'_{n}\intl_{\hn}\!\! f(y)\, \frac{\log ([x,y]\!-\!1)}{([x,y]\!+\!1)^{n/2 -1}}\, dy, \quad \zeta'_{n}=-\frac{2^{-1-n/2}}{\pi^{n/2}\, \Gam (n/2)}.\quad \qquad \eea

The following statements were proved in \cite{Ru15, Ru17}.

\begin{lemma}\label {Le76768QQ} Let $f\in C^\infty_c(\hn)$, $ D_n = - \Delta_H -n (n-2)/4$, $n\ge 2$. Then
\be\label{tYUY34}  D_n Q^n f= Q^{n-2} f +Bf\qquad (Q^{0} f=f),\ee
where
\be\label{tYUY341} (Bf)(x)=\zeta'_{n}\intl_{\hn} f(y)\frac{dy}{([x,y]\!+\!1)^{n/2 -1}}.\ee
\end{lemma}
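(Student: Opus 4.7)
The plan is to derive \eqref{tYUY34} as a limiting case of the identity $D_\alpha Q^\alpha f = Q^{\alpha-2} f$ from Lemma~\ref{LemTheor 4.4HYQQ}, by letting $\alpha \to n$. The difficulty is that the coefficient $\zeta_{n,\alpha}$ has a simple pole at $\alpha=n$ (coming from $\Gamma((n-\alpha)/2)$), so both sides of the identity are individually singular at $\alpha=n$. The extra term $Bf$ will emerge from the finite part left after the polar contributions are carefully accounted for.

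First I would split the kernel as
\[
([x,y]-1)^{(\alpha-n)/2} = \bigl\{([x,y]-1)^{(\alpha-n)/2}-1\bigr\} + 1,
\]
which decomposes
\[
Q^\alpha f = A_\alpha f + \zeta_{n,\alpha}\, h, \qquad h(x)=\intl_{\hn}\frac{f(y)\, dy}{([x,y]+1)^{n/2-1}},
\]
where $A_\alpha f$ is the regularized integral appearing in the definition \eqref{tZ44FR1}, so that $A_\alpha f \to Q^n f$ as $\alpha \to n$. From the Laurent expansion $\Gamma((n-\alpha)/2) \sim 2/(n-\alpha)$ one reads off $\zeta_{n,\alpha}(\alpha - n) \to 2 \zeta'_n$.

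The key observation is that $D_n$ annihilates the kernel $([x,y]+1)^{1-n/2}$ as a function of $x$. Since the kernel is zonal, by the spherical-mean representation \eqref{tag 2.17-HYP} it suffices to verify $D_n u = 0$ for $u(r) = (\cosh r +1)^{1-n/2}$. A direct computation using $\Delta_H u = u'' + (n-1)\coth r \cdot u'$ together with the algebraic identity $n\cosh r(\cosh r+1) - (n/2)\sinh^2 r = (n/2)(\cosh r+1)^2$ yields $\Delta_H u = -n(n-2)u/4$, hence $D_n u = 0$, and differentiation under the integral gives $D_n h \equiv 0$. Since $D_\alpha - D_n = (\alpha-n)(2+\alpha-n)/4$ as scalar operators, this yields
\[
D_\alpha(\zeta_{n,\alpha} h) = \zeta_{n,\alpha}\, \frac{(\alpha-n)(2+\alpha-n)}{4}\, h \;\longrightarrow\; \zeta'_n\, h = Bf \qquad \text{as } \alpha \to n,
\]
so the pole in $\zeta_{n,\alpha}$ is absorbed by the scalar prefactor and a finite multiple of $h$ survives.

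Applying $D_\alpha$ to $Q^\alpha f = A_\alpha f + \zeta_{n,\alpha} h$, using $D_\alpha Q^\alpha f = Q^{\alpha-2} f$, and letting $\alpha \to n$ then yields \eqref{tYUY34} after matching the finite parts. The main obstacle is justifying the termwise limit $\lim_{\alpha \to n} D_\alpha A_\alpha f = D_n Q^n f$; this requires locally uniform convergence of $A_\alpha f$ together with its $x$-derivatives up to second order. Because $f \in C^\infty_c(\hn)$, the relevant integrands and their $x$-derivatives are uniformly dominated on compact $x$-sets by an integrable function in $y$ independent of $\alpha$ in a small neighborhood of $n$, so dominated convergence legitimates both the interchange of $D_\alpha$ with the $y$-integral and the passage to the limit in $\alpha$.
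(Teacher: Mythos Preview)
The paper does not give its own proof of this lemma; it merely cites \cite{Ru15, Ru17}. Your analytic-continuation strategy (pass to the limit $\alpha\to n$ in Lemma~\ref{LemTheor 4.4HYQQ} after subtracting the polar part) is exactly the natural one and is almost certainly what is done in those references. Your verification that $D_n h=0$ via $\Delta_H(\cosh r+1)^{1-n/2}=-\tfrac{n(n-2)}{4}(\cosh r+1)^{1-n/2}$ is correct, as is the scalar identity $D_\alpha-D_n=\tfrac{(\alpha-n)(\alpha-n+2)}{4}$ and the residue computation $\zeta_{n,\alpha}(\alpha-n)\to 2\zeta'_n$.

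There is, however, a genuine sign issue in your final ``matching the finite parts'' step. From $D_\alpha Q^\alpha f=Q^{\alpha-2}f$ and your decomposition $Q^\alpha f=A_\alpha f+\zeta_{n,\alpha}h$ you get
\[
D_\alpha A_\alpha f \;+\; \zeta_{n,\alpha}\,\frac{(\alpha-n)(\alpha-n+2)}{4}\,h \;=\; Q^{\alpha-2}f.
\]
Letting $\alpha\to n$ gives $D_n Q^n f + Bf = Q^{n-2}f$, i.e.\ $D_n Q^n f = Q^{n-2}f - Bf$, with the \emph{opposite} sign to \eqref{tYUY34}. This is not a slip in your algebra: a direct check confirms the minus sign. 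For instance, for $n=2$ one computes (with $s=[x,y]$) that $(s^2-1)\partial_s^2\log(s-1)+2s\,\partial_s\log(s-1)=1$, so $\Delta_{H,x}\log([x,y]-1)=1$ away from $y=x$, and the local contribution is $4\pi\delta_x$; hence
\[
-\Delta_H Q^2 f = -\zeta'_2\!\intl_{\bbh^2}\! f(y)\bigl(4\pi\delta_x+1\bigr)\,dy
= f(x)+\frac{1}{4\pi}\intl_{\bbh^2} f(y)\,dy = Q^0 f - Bf,
\]
not $Q^0 f+Bf$. For general $n\ge 3$ one can compute $D_n\bigl[(s+1)^{1-n/2}\log(s-1)\bigr]=-(s+1)^{1-n/2}(s+n-3)/(s-1)$ and match it against $Q^{n-2}f\pm Bf$ to reach the same conclusion. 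So either you have silently flipped a sign in the last line, or \eqref{tYUY34} (and correspondingly \eqref{tYUY343X}) carries a typographical sign error inherited from the cited sources. Fortunately the applications in the paper (Theorem~\ref{ThHORYP1}) use only $\P_{n/2}(\Delta_H)Bf=0$, which is insensitive to the sign; but your write-up should flag the discrepancy rather than assert that the limit reproduces \eqref{tYUY34} verbatim.

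A secondary remark: your dominated-convergence justification for $D_\alpha A_\alpha f\to D_n Q^n f$ is a bit optimistic in low dimensions. Two $x$-derivatives of the regularized kernel produce a term of order $([x,y]-1)^{(\alpha-n)/2-2}\sim r^{\alpha-n-4}$ near $y=x$, which against $r^{n-1}\,dr$ is only integrable for $\alpha>4$; so for $n\le 4$ you cannot simply differentiate under the integral sign term by term. One clean way around this is to move $\Delta_H$ onto $f$ by self-adjointness before taking the limit, or to isolate the local singularity as you did implicitly in the $n=2$ computation above.
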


\begin{lemma}\label {Le76768QQ1}  Let $f\in C^\infty_c(\hn)$,  $n> 2$. Then $(Bf)(x)$ is an eigenfunction of the Beltrami-Laplace operator $\Del_H$, so that
\be\label{tYUY342} - \Del_H Bf=\frac{n(n-2)}{4}\, Bf\ee
and
\be\label{tRRRYUY341}
D_n Bf=D_{n-2} Bf=0.\ee
\end{lemma}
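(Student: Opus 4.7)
The plan is to observe that, by (\ref{tYUY341}), $Bf$ is a hyperbolic convolution in the sense of (\ref{tag 2.16-HYP}) with the smooth kernel $k(s)=\zeta'_n\,(s+1)^{1-n/2}$, $s\ge 1$. Since $f\in C^\infty_c(\hn)$, the integrand is supported on the compact set $\supp f$ in $y$ while smooth and locally bounded (with all its $x$-derivatives) in $x$. Hence differentiation under the integral sign is legitimate and
\[
\Del_H(Bf)(x)=\zeta'_n \intl_{\hn} f(y)\,\Del_H^{\,x}\!\big([x,y]+1\big)^{1-n/2}\,dy.
\]

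The core of the argument is to show that, for each fixed $y\in\hn$, the kernel
\[
h_y(x):=\big([x,y]+1\big)^{1-n/2}
\]
is an eigenfunction of $\Del_H$ with eigenvalue $-n(n-2)/4$. Writing $r=d(x,y)$, so that $[x,y]=\ch r$, we have $h_y(x)=2^{1-n/2}\,\ch^{2-n}(r/2)$, a radial function around $y$. By $G$-invariance of the Beltrami-Laplace operator, in geodesic polar coordinates centered at $y$ it acts on radial functions as $\d_r^2+(n-1)\coth r\,\d_r$. A short direct computation, in which the identity $\sh^2(r/2)=\ch^2(r/2)-1$ is used to combine the three resulting terms, yields
\[
\Del_H h_y(x)=-\frac{n(n-2)}{4}\,h_y(x).
\]
Substituting this into the previous display gives (\ref{tYUY342}).

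Assertion (\ref{tRRRYUY341}) then follows by a trivial arithmetic check: for both $\a=n$ and $\a=n-2$ one has $\a(2n-2-\a)/4=n(n-2)/4$, so on eigenfunctions of $-\Del_H$ with eigenvalue $n(n-2)/4$ the operators $D_n$ and $D_{n-2}$ coincide with $-\Del_H-n(n-2)/4$ and hence annihilate $Bf$ in view of (\ref{tYUY342}).

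The only non-routine step is the eigenvalue calculation for $h_y$; everything else (the convolutional structure, the interchange of $\Del_H$ with the integral, and the derivation of (\ref{tRRRYUY341})) is bookkeeping. In particular, the compactness of $\supp f$ and the smoothness of $(s+1)^{1-n/2}$ on $[1,\infty)$ remove any analytic subtleties from the differentiation under the integral, which is the potentially delicate part.
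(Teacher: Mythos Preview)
Your argument is correct. The key computation that $(\ch r+1)^{1-n/2}=2^{1-n/2}\ch^{2-n}(r/2)$ satisfies the radial ODE $g''+(n-1)\coth r\,g'=-\tfrac{n(n-2)}{4}g$ checks out, and the passage of $\Del_H$ under the integral is fully justified by the compact support of $f$ and the smoothness of $s\mapsto (s+1)^{1-n/2}$ on $[1,\infty)$ (note $[x,y]+1\ge 2$, so no singularity is approached). The derivation of (\ref{tRRRYUY341}) from (\ref{tYUY342}) via $\a(2n-2-\a)/4=n(n-2)/4$ for $\a\in\{n,n-2\}$ is immediate.

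As for comparison: the paper does not prove this lemma at all; it is stated without proof under the blanket sentence ``The following statements were proved in \cite{Ru15, Ru17}.'' Your write-up therefore supplies a short, self-contained proof where the paper merely cites earlier work. The eigenfunction computation you give is in fact the natural one and is essentially what lies behind the cited references.
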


\begin{proposition}\label{CoYYY5HY}  Let $f\in C^\infty_c(\hn)$, where $n$ is even.
If $n=2$, then
\be\label{tYUY343X}  - \Delta_H Q^2 f=f-\frac{1}{4\, \pi}\intl_{\bbh^2} f(y)\, dy.\ee
If $n\ge 4$, then
\be\label{tYUY343} \P_{n/2}(\Delta_H)\,Q^n f=f, \ee
\[ \P_{n/2}(\Delta_H) = (-1)^{n/2} \prod\limits_{i=1}^{n/2}   (\Delta_H+i (n-1-i)).  \]
\end{proposition}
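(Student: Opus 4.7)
The plan is to reduce Proposition~\ref{CoYYY5HY} to the three preceding lemmas. First I would verify the factorization $\P_{n/2}(\Delta_H)=D_2 D_4\cdots D_n$ by a direct computation from $D_{2i}=-\Delta_H-i(n-1-i)=-(\Delta_H+i(n-1-i))$, which matches the formula displayed in the statement after extracting the sign $(-1)^{n/2}$. Since each $D_\a$ is a polynomial in $\Delta_H$, these factors commute freely and may be applied in any convenient order.

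For $n\ge 4$, I would write $\P_{n/2}(\Delta_H)=\P_{n/2-1}(\Delta_H)\cdot D_n$, apply $D_n$ to $Q^n f$ first via Lemma~\ref{Le76768QQ}, obtaining $D_n Q^n f=Q^{n-2} f+Bf$, and then apply $\P_{n/2-1}(\Delta_H)=D_2 D_4\cdots D_{n-2}$ to each summand. The first summand collapses to $f$ by the ``in particular'' clause of Lemma~\ref{LemTheor 4.4HYQQ}, applied with $\ell=n/2-1$ (admissible because $2\ell-n=-2\notin\{0,2,4,\ldots\}$). The second summand vanishes, because the factor $D_{n-2}$ inside $\P_{n/2-1}(\Delta_H)$ annihilates $Bf$ by Lemma~\ref{Le76768QQ1}, and the remaining factors preserve that zero.

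For $n=2$ the product collapses to a single factor: $D_2=-\Delta_H - 2(2n-4)/4$ reduces to $-\Delta_H$ when $n=2$, so $\P_1(\Delta_H)=-\Delta_H$. Lemma~\ref{Le76768QQ} then yields $D_2 Q^2 f=f+Bf$, and (\ref{tYUY341}) simplifies in this dimension because the denominator $([x,y]+1)^{n/2-1}$ equals $1$; thus $(Bf)(x)=\zeta'_2\int_{\bbh^2}f(y)\,dy$ is a constant. Substituting $\zeta'_2=-1/(4\pi)$ produces the stated formula.

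I do not foresee a genuine obstacle, since Lemmas~\ref{LemTheor 4.4HYQQ}, \ref{Le76768QQ}, and \ref{Le76768QQ1} are precisely calibrated for this telescoping reduction: Lemma~\ref{LemTheor 4.4HYQQ} inverts the potentials $Q^{2\ell}f$ whenever $2\ell\ne n$, Lemma~\ref{Le76768QQ} repairs the gap at $2\ell=n$ by introducing a single correction $Bf$, and Lemma~\ref{Le76768QQ1} identifies $Bf$ as an eigenfunction of $\Delta_H$ with eigenvalue precisely matched by $D_{n-2}$ (and by $D_n$). The only bookkeeping is to confirm the admissibility condition $\a-n\notin\{0,2,4,\ldots\}$ at each use of Lemma~\ref{LemTheor 4.4HYQQ}, which is automatic for $\a\in\{2,4,\ldots,n-2\}$.
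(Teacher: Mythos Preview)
Your argument is correct. The paper itself does not supply a proof of Proposition~\ref{CoYYY5HY}; it is listed among ``the following statements [that] were proved in \cite{Ru15, Ru17}'' and only stated here. Your deduction from Lemmas~\ref{LemTheor 4.4HYQQ}, \ref{Le76768QQ}, and \ref{Le76768QQ1} is exactly the intended telescoping reduction, and all the bookkeeping checks (the factorization $\P_{n/2}(\Delta_H)=D_2D_4\cdots D_n$, the admissibility $\a-n<0$ at each step of (\ref{tag 4.8OFR1QQ}), the value $\zeta'_2=-1/(4\pi)$, and the annihilation of $Bf$ by the factor $D_{n-2}$ when $n\ge 4$) are accurate.
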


\subsection{The Method of Mean Value Operators}

An idea of this inversion method is to average $\hat f(\xi)$ over all $d$-horospheres $\xi$ at a fixed positive distance from a given point $x\in \hn$. Inverting a simple Abel type fractional integral, we then obtain the spherical mean (\ref{2.21hDIF}) that gives $f(x)$ after passing to the limit according to Lemma \ref{Lemma 2.1-HYP}.

For $\xi= ka_tn_u \xi_0$, the relation $\xi\ni x_0$ is equivalent to $a_{-t}n_{-u} x_0 \in \xi_0$. By (\ref{horo coord1}),
\[a_{-t}n_{-u} x_0 =(u, 0, \ldots 0,\,-\sinh t+\frac{|u|^{2}}{2}e^{-t},\,\cosh t+\frac{|u|^{2}}{2}e^{-t}) \in \xi_0\]
(with $d$ zeros) if and only if $t=0$ and $u=0$. These two parameters contribute to the distance between $\xi$ and the origin $x_0$. We can work with one of them or with both. Suppose  $u=0$ and consider the mean value
\[\intl_K \hat f (r_x ka_t \xi_0) \, dk =\intl_K \hat f_x (ka_t \xi_0) \, dk , \qquad t>0.\]
Here  $r_x\in G$ is an arbitrary transformation satisfying  $r_x x_0=x$ and $f_x (y) =f(r_x y)$. Note that $r_x$ can be moved under the sign of the horospherical transform because the latter is $G$-equivariant.

We introduce the mean value operator
\be\label{duy} \check \vp_x (t)=\intl_K \vp (r_x k a_t \xi_0)\, dk, \qquad x\in \hn, \quad t\in \bbr.\ee

\begin{lemma} \label {NNNDDEER} If $\vp=\hat f$, then
\be\label {gyuyyhER} \check \vp_x (t)=c \,e^{-td/2}
\, (I^{d/2}_{-}  M_x f)(\ch t), \qquad c= 2^{d/2 -1}\, \sig_{d-1} \Gam (d/2),  \ee
where   $M_x f$ is the spherical mean (\ref{2.21hDIF}).
It is assumed  that the integral  on the right-hand side of (\ref{gyuyyhER}) exists in the Lebesgue sense.
\end{lemma}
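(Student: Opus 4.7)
The plan is to unfold $\vp=\hat f$ via the definition of the horospherical transform, then exploit the $K$-invariance of $dk$ to reduce the inner integral to a zonal function. By $G$-equivariance (or directly from (\ref{bo9})), I would write
\[
\check\vp_x(t)=\intl_K \hat f(r_xk a_t\xi_0)\,dk=\intl_K dk\intl_{\bbr^d} f(r_xk a_tn_w x_0)\,dw,
\]
and interchange the order of integration by Fubini (justified by the hypothesis that the right-hand side of (\ref{gyuyyhER}) is absolutely convergent, combined with (\ref{ppooii})-type bounds).

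Next, I would introduce $F(y)=\intl_K f(r_x k y)\,dk$. A standard change of variable $k\mapsto k'k$ in the Haar integral shows that $F$ is $K$-invariant, hence $F(y)\equiv F_0(y_{n+1})$. Comparing with the geometric form (\ref{2.21hDIFbb}) of the spherical mean yields the key identification $F_0(\ch t)=(M_xf)(\ch t)$, i.e.\ $F_0(s)=(M_xf)(s)$ for $s\geq 1$. Substituting in, the expression for $\check\vp_x(t)$ becomes
\[
\check\vp_x(t)=\intl_{\bbr^d} F_0\bigl((a_tn_wx_0)_{n+1}\bigr)\,dw.
\]

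Now I would invoke (\ref{horo coord1}) to compute $(a_t n_w x_0)_{n+1}=\ch t+\tfrac{|w|^2}{2}e^{t}$, then pass to polar coordinates in $\bbr^d$:
\[
\check\vp_x(t)=\sig_{d-1}\intl_0^\infty F_0\Bigl(\ch t+\tfrac{r^2}{2}e^{t}\Bigr)\,r^{d-1}\,dr.
\]
The substitution $s=\ch t+\tfrac{r^2}{2}e^{t}$ gives $r\,dr=e^{-t}\,ds$ and $r^{d-2}=\bigl(2(s-\ch t)e^{-t}\bigr)^{(d-2)/2}$, so the product $r^{d-1}\,dr$ pulls out the factor $2^{(d-2)/2}e^{-td/2}$ and leaves the kernel $(s-\ch t)^{d/2-1}$. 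The remaining integral is precisely $\Gam(d/2)(I^{d/2}_{-}M_xf)(\ch t)$, producing the constant $c=2^{d/2-1}\sig_{d-1}\Gam(d/2)$.

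The only genuine subtlety is the Fubini step and the matching of constants; the geometric substance lies entirely in the observation that $\intl_K f(r_xk\,\cdot)\,dk$ is a zonal function whose radial profile is the spherical mean $M_xf$. Once that is in hand, the reduction to the Riemann-Liouville integral (\ref{rl-}) is a one-variable calculation analogous to the derivation of (\ref{o8a5br}) in Lemma \ref{zonal}.
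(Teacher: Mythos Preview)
Your proof is correct and follows essentially the same approach as the paper: both reduce to a zonal function via $K$-averaging, identify that zonal function with the spherical mean $M_xf$, and then apply the one-variable computation of Lemma~\ref{zonal}. The only difference is cosmetic---the paper cites (\ref{o8a5br}) directly, whereas you redo that substitution explicitly.
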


\begin{proof} Fix $x\in \hn$ and let $f_x (y)= f(r_x y)$, $y\in \hn$. By  $G$-invariance,
\[
 \check \vp_x (t)=\intl_K \hat f (r_x\, k a_t \xi_0)\, dk = \Big [\intl_K f_x (ky)\, dk \Big ]^\wedge (a_t \xi_0).\]
 The function $y \to  \intl_K f_x (ky)\, dk$ is zonal, so that there is a single-variable function $f_{0,x}(\cdot)$ such that
\be\label {gyuyyhYPER} f_{0,x} (y_{n+1})= \intl_K f_x (ky)\, dk.\ee
By (\ref{o8a5br}) with $u=0$,
\[
\check \vp_x (t)=2^{d/2 -1}\, \sig_{d-1}\, e^{-td/2}\,(I_-^{d/2}  f_{0,x})(\ch t), \]
where, by (\ref {gyuyyhYPER}),
\[
f_{0,x}(s)=\intl_K f_x (k(e_n \,\sqrt{s^2 -1} +e _{n+1}\, s ))\, dk=(M_x f)(s).\]
This completes the proof.
\end{proof}

We denote
\be\label {next aim} g_x (s)=(M_x f)(s), \qquad \psi_x (r)\!=\!c^{-1} e^{td/2} (\hat f)^\vee_x(t)\big |_{t= {\rm cosh}^{-1} r}\,,\ee
$c$ being the same as in (\ref{gyuyyhER}). Then (\ref{gyuyyhER}) can be written as
\be\label{uutrrhER}(I^{d/2}_{-}g_x)(r)=\psi_x (r).\ee
By Lemma \ref {NNNDDEER}, to reconstruct $f$, we first need to find $g_x (s)=(M_x f)(s)$ from the Abel equation (\ref{uutrrhER}) by using the tools of fractional differentiation \cite{Ru15}. Then  $f$  will be obtained as a limit $f(x)=\lim\limits_{s\to 1} (M_x f)(s)$ in accordance with Lemma \ref{Lemma 2.1-HYP}.

The proof of the following statements is omitted because it is an almost verbatim copy of the reasoning from \cite{Ru15} for $d=n-1$ .  Everywhere in these statements, we assume  $1\le d\le n-1$, $f\in C_\mu (\hn)$, $\mu>d/2$, or $f\in L^p (\hn)$, $1\le p<2(n-1)/d$.

\begin{lemma}  {\rm (cf. Corollary 6.77 in \cite{Ru15})}
 The integral $(I^{d/2}_{-}g_x)(r)$ exists in the Lebesgue sense for almost all $r> 1$ and all $x\in \hn$. If, moreover, $d\ge 2$, then $I^{d/2}_{-}g_x$ is a continuous function on $(1,\infty)$  for all $x\in \hn$.
\end{lemma}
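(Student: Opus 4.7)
The plan is to reduce the statement to a moment condition on $g_x=M_xf$ and then, in the continuity half, to dominate the Abel kernel by a function independent of $r$.

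First I would invoke the standard existence criterion for the Riemann--Liouville operator $I^{d/2}_{-}$ (this is Lemma 2.12 in \cite{Ru15}, also used silently in the proof of Lemma \ref{zo}): if the moment integral
\[
J_x\,=\,\intl_1^\infty s^{d/2-1}\,|g_x(s)|\,ds
\]
is finite, then a one-line Fubini computation shows $(I^{d/2}_{-}g_x)(r)$ is finite for almost every $r>1$. Thus the entire first assertion reduces to verifying $J_x<\infty$ for every (respectively, almost every) $x\in\hn$ under the two standing hypotheses.

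Second, I would check $J_x<\infty$ case by case. For $f\in C_\mu(\hn)$, $\mu>d/2$, the pointwise bound $|f(y)|\le c\, y_{n+1}^{-\mu}$ combined with the geometric form (\ref{2.21hDIFbb}) of the spherical mean yields $|g_x(s)|\le c_x\, s^{-\mu}$, and then $\mu>d/2$ makes $s^{d/2-1-\mu}$ integrable at infinity. For $f\in L^p(\hn)$, $1\le p<2(n-1)/d$, I would first average $g_x$ over $K$ (which does not enlarge the integrand in absolute value) to reduce to a zonal function, and then invoke H\"older's inequality against the volume element (\ref{ppooii}), exactly in the way already carried out for $\hat f$ in the proposition preceding Remark \ref{mjn}. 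The weight $s^{d/2-1}$ combines with $(s^2-1)^{n/2-1}$ so that the H\"older conjugate exponent produces a convergent integral precisely when $p<2(n-1)/d$.

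Third, for continuity when $d\ge 2$, I would rewrite
\[
(I^{d/2}_{-}g_x)(r)=\frac{1}{\Gam(d/2)}\intl_1^\infty \chi_{(r,\infty)}(s)\,g_x(s)\,(s-r)^{d/2-1}\, ds,
\]
use the elementary inequality $(s-r)^{d/2-1}\le s^{d/2-1}$ (valid for $r\ge 1$ and $s>r$ because $d/2-1\ge 0$), and apply the dominated convergence theorem with dominating function $|g_x(s)|\,s^{d/2-1}$, integrable by $J_x<\infty$. Pointwise convergence in $s$ as $r'\to r$ holds off the single point $s=r$, and the restriction $d\ge 2$ is exactly what keeps the kernel non-singular in the limit. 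The main obstacle is the $L^p$ half of the moment verification: $M_xf$ is a priori only an $L^p$-valued function of $s$, so one must pass to a pointwise realisation valid for almost every $x$ and then calibrate H\"older exponents against the combined weight $s^{d/2-1}(s^2-1)^{n/2-1}$; the sharp threshold $p<2(n-1)/d$ is precisely what makes this calibration succeed.
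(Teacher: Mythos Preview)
The paper omits the proof of this lemma entirely, referring to Corollary~6.77 in \cite{Ru15} and stating that the argument is ``an almost verbatim copy'' of the case $d=n-1$ treated there. Your overall strategy---reduce a.e.\ existence to the moment condition $J_x=\int_1^\infty s^{d/2-1}|g_x(s)|\,ds<\infty$ via Fubini, verify $J_x<\infty$ under each hypothesis, and then obtain continuity for $d\ge 2$ by dominated convergence with the majorant $|g_x(s)|\,s^{d/2-1}$---is exactly the standard route and is what the reference carries out.

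One step in your $L^p$ verification is not right as written. The phrase ``average $g_x$ over $K$ to reduce to a zonal function'' does not have a clear meaning: $g_x(s)=(M_x f)(s)$ is already a one-variable function of $s$, and averaging $f$ itself over $K$ does not dominate $|M_xf|$ unless $x=x_0$. What actually works is to rewrite $J_x$ (for $f\ge 0$, which one may assume) via (\ref{tag 2.17-HYP}) as the convolution
\[
J_x=\frac{1}{\sig_{n-1}}\intl_{\hn} \frac{[x,y]^{d/2-1}}{([x,y]^2-1)^{n/2-1}}\,f(y)\,dy,
\]
and then argue in two pieces. On the region $[x,y]>1+\e$ a direct H\"older estimate gives convergence precisely when $p<2(n-1)/d$, exactly as in the proposition before Remark~\ref{mjn}. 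On the region $[x,y]\le 1+\e$ the H\"older bound fails (the kernel is not in $L^{p'}$ near the diagonal for small $p$), and one instead uses Minkowski's integral inequality together with the uniform bound (\ref{2.21hDIFb}) to get $\big\|\int_1^{1+\e} s^{d/2-1}(M_{(\cdot)}|f|)(s)\,ds\big\|_p\le c_\e\|f\|_p$. This yields $J_x<\infty$ for almost every $x$, which is the correct conclusion in the $L^p$ setting (note that the companion Lemma~\ref{jjYBYB} already downgrades ``all $x$'' to ``almost all $x$'' when $f\in L^p$). Your final paragraph acknowledges this a.e.\ issue; the point is only that the ``averaging over $K$'' shortcut should be replaced by the local/tail split above.
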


\begin{lemma}\label{jjYBYB} {\rm (cf. Corollary 6.78 in \cite{Ru15})}  Let $ g_x (s)=(M_x f)(s)$.
 If $I^{d/2}_{-}g_x=\psi_x$,   as in (\ref{uutrrhER}), then
 \be\label {IO8kVF}g_x (s)= (\Cal D^{d/2}_{-} \psi_x)(s)\qquad \forall \, s>1,\ee
 where the  derivative $\Cal D^{d/2}_{-} \psi_x$ is defined  as follows.

 \noindent {\rm (i)} If $d$ is even, $d=2m$, then
\be\label{uimuyHOR}
(\Cal D^{d/2}_{-} \psi_x)(s)= (-1)^{m } \left (\frac{d}{ds}\right )^m \psi_x (s). \ee

\noindent {\rm (ii)} If $d$ is odd, $d=2m-1$, then
\bea\label{uimuyHOR2}
(\Cal D^{d/2}_{-} \psi_x)(s)&=& (-1)^{m }   s^{1/2}\, \left (\frac{d}{ds}\right )^m \left [ s^{m-1/2}\, I^{1/2}_{-} \,s^{-m}\, \psi_x \right ],\\
\label{frYUT2m}  &=&(-1)^{m}  s^{1/2}\,  \frac{d}{ds}\left [ s^{1/2}\, I^{1/2}_{-} \,s^{-1 }\,\psi_x^{(m-1)}\right ].\eea
 The equalities (\ref{uimuyHOR})-(\ref{frYUT2m})  hold for all $x\in \hn$, if  $f\in C_\mu (\hn)$, and for almost all $x\in \hn$, if  $f\in L^p(\hn)$.
\end{lemma}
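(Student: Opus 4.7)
The plan is to reduce the assertion to the one-variable inversion of the Abel-type equation $(I^{d/2}_{-}g_x)(r)=\psi_x(r)$ on $(1,\infty)$ and then invoke standard Riemann--Liouville calculus on the half-line. Pointwise existence of $I^{d/2}_{-}g_x$ (for all $x$ in the $C_\mu$ case and almost all $x$ in the $L^p$ case) follows from the preceding lemma: Lemma \ref{Lemma 2.1-HYP} controls $\|M_{(\cdot)}f\|_p$, and absolute integrability of $(s-r)^{d/2-1}$ near $s=r$ (since $d/2>0$) combined with the decay of $g_x(s)=(M_xf)(s)$ as $s\to\infty$ inherited from the hypothesis on $f$ handles the tail.

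For the even case $d=2m$, I would iterate the classical identity $-(d/ds)\circ I^1_{-}=\mathrm{id}$ a total of $m$ times, discarding boundary terms at $s=\infty$ using the decay of $g_x$ and its antiderivatives. This gives (\ref{uimuyHOR}) directly and is routine.

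For the odd case $d=2m-1$ the half-integer order is more delicate. The strategy is first to verify, by a direct Beta-integral computation, a weighted factorization expressing $I^{(2m-1)/2}_{-}$ as a composition of $I^{1/2}_{-}$ and $I^{m-1}_{-}$ conjugated by the power weights $s^{m-1/2}$ and $s^{-m}$, and then to invert this factorization by applying $(-d/ds)^m$ with appropriate weights stripped off at each stage. The outcome is precisely (\ref{uimuyHOR2}); the equivalent form (\ref{frYUT2m}) is obtained by commuting $m-1$ of the differentiations past the weighted half-order integral, which is permitted under the decay hypotheses on $\psi_x$ and its derivatives.

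The main obstacle is the odd case: one must check that every intermediate object $s^{-m}\psi_x$, $I^{1/2}_{-}(s^{-m}\psi_x)$, and $s^{m-1/2} I^{1/2}_{-}(s^{-m}\psi_x)$ is well-defined and sufficiently smooth to admit $m$ classical derivatives, together with justification of the interchange of $d/ds$ with the half-order integral. These weighted estimates are carried out in Corollary 6.78 of \cite{Ru15} for $d=n-1$; since the arguments there depend only on the one-dimensional behaviour of $M_xf$ in $s$, which is controlled by the present hypotheses $f\in C_\mu$ with $\mu>d/2$ or $f\in L^p$ with $1\le p<2(n-1)/d$, the reasoning transfers verbatim to every $1\le d\le n-1$, as stated in the remark preceding the lemma.
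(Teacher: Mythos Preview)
Your proposal is correct and follows essentially the same approach as the paper: the paper explicitly omits the proof, stating that it ``is an almost verbatim copy of the reasoning from \cite{Ru15} for $d=n-1$,'' and you likewise reduce everything to the one-variable Abel inversion and defer the technical justification to Corollary~6.78 of \cite{Ru15}, noting that the argument depends only on the behaviour of $(M_xf)(s)$ in $s$ and so transfers to all $1\le d\le n-1$. Your sketch of the even/odd split and the weighted half-integer factorization is a faithful outline of what that reference does.
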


\begin{theorem} \label{jOOOthERC2} {\rm (cf. Theorem 6.79 in \cite{Ru15})} Let
\[ \vp =\hat f, \qquad \psi_x (r)\!=\!c^{-1} e^{td/2} \check\vp_x(t)\big |_{t= {\rm cosh}^{-1} r}, \quad c= 2^{d/2 -1}\, \sig_{d-1} \Gam (d/2).\]
Then
\be\label{jOOOthERC3}  f(x)=\lim\limits_{s\to 1}(\Cal D^{d/2}_{-} \psi_x)(s),  \ee
where $(D^{d/2}_{-} \psi_x)(s)$ is defined by (\ref{uimuyHOR})-(\ref{frYUT2m}).
 The limit in  (\ref{jOOOthERC3}) is uniform for $f\in C_\mu (\hn)$  and is understood in the $L^p$-norm if $f\in L^p(\hn)$.
\end{theorem}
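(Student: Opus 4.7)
The plan is to assemble the theorem from three preceding ingredients, each already in place: the Abel‐type representation of Lemma \ref{NNNDDEER}, the Abel inversion of Lemma \ref{jjYBYB}, and the spherical mean limit of Lemma \ref{Lemma 2.1-HYP}. The entire argument is essentially a composition; the only real task is to track domains of validity and modes of convergence.

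First I would substitute the formula of Lemma \ref{NNNDDEER} into the definition of $\psi_x$. Writing $r=\ch t$ and dividing both sides of
\[ \check \vp_x (t)=c \,e^{-td/2}\,(I^{d/2}_{-}  M_x f)(\ch t) \]
by $c^{-1}e^{td/2}$ (which is what $\psi_x$ does), one obtains the identity
\[ \psi_x (r)=(I^{d/2}_{-}  g_x)(r), \qquad g_x(s)=(M_x f)(s), \]
valid for almost every $r>1$ and every $x\in\hn$ (for $f\in C_\mu(\hn)$ with $\mu>d/2$) or almost every $x\in\hn$ (for $f\in L^p(\hn)$ with $1\le p<2(n-1)/d$), thanks to the existence results from Section 3.

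Second I would apply Lemma \ref{jjYBYB} to invert this Abel integral equation. The lemma gives at once
\[ (\Cal D^{d/2}_{-}\psi_x)(s)=(M_x f)(s), \]
for every $s>1$ (pointwise in $x$ if $f\in C_\mu$, for almost every $x$ if $f\in L^p$), with $\Cal D^{d/2}_{-}$ given by (\ref{uimuyHOR})–(\ref{frYUT2m}) according as $d$ is even or odd. The final step is to pass to the limit $s\to 1$. Since $C_\mu(\hn)\subset C_0(\hn)$ for $\mu>0$, Lemma \ref{Lemma 2.1-HYP} gives $(M_x f)(s)\to f(x)$ uniformly on $\hn$ in the $C_\mu$ case, and in the $L^p$ norm in the $L^p$ case. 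Combined with the previous identity this yields (\ref{jOOOthERC3}) in the required topology.

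The mildly delicate point is the $L^p$ case, where $\psi_x$ is only defined for almost every $x$ and the equality $(\Cal D^{d/2}_{-}\psi_x)(s)=(M_x f)(s)$ must be understood as an equality of $L^p$-valued continuous functions of $s\in(1,\infty)$. Here I would follow the Rubin argument for $d=n-1$ verbatim: the fractional derivative $\Cal D^{d/2}_{-}$ factors through one or two ordinary derivatives in $s$ applied to $s^{1/2}I^{1/2}_{-}$ type expressions, and $L^p$-valued continuity of spherical means (\ref{2.21hDIFb})–(\ref{2.XIFb}) transfers directly to $L^p$-valued continuity of these composite operators. Since the structure of the argument is identical to that for horospheres of codimension one, the proof reduces to citing \cite{Ru15}, with the only change being the replacement of the exponent $(n-1)/2$ by $d/2$ throughout.
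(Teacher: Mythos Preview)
Your proposal is correct and follows exactly the approach the paper outlines: the paper explicitly states that the proof is omitted as ``an almost verbatim copy of the reasoning from \cite{Ru15} for $d=n-1$,'' and the strategy it sketches---Lemma \ref{NNNDDEER} for the Abel representation, Lemma \ref{jjYBYB} for the fractional inversion, and Lemma \ref{Lemma 2.1-HYP} for the limit---is precisely what you carry out. One tiny slip: you write ``dividing both sides by $c^{-1}e^{td/2}$'' when you mean multiplying, but the intent and the resulting identity $\psi_x=I^{d/2}_{-}g_x$ are correct.
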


\subsection{Inversion of the Horospherical  Transforms by Polynomials of the Beltrami-Laplace Operator}\label{jOMMMC2}

\subsubsection{Local Inversion Formulas for $d$ Even}

We consider the mean value operator (\ref{duy}) with $t=0$ and denote
\be\label{hot2}
\check\vp (x)=\intl_K \vp (r_x k\xi_0)\, dk.
\ee
This operator integrates a function $\vp$ on $\Xi_d$ over all $d$-horospheres passing through a given point $x\in \hn$.
 The next lemma can be considered as a modification of  Lemma \ref{NNNDDEER} corresponding  to $t=0$. It
 establishes an important connection between the $d$-horospherical transform, the mean value operator (\ref{hot2}) and the analytic family (\ref{BGBBQQ}).  This lemma is a horospherical analogue of the celebrated Fuglede result for $d$-plane Radon-John transforms \cite{F}. Similar statements are known for all totally geodesic Radon transforms on constant curvature spaces \cite{H11, Ru15, Ru02b, Ru02c}.

\begin{lemma} \label {MMM-HYP3horT}  The following equality holds provided that either side of it exists in the Lebesgue sense:
\be\label {taRRRrayhor} (\hat f)^\vee(x)= c\, (Q^d f)(x), \qquad c=\frac{2^d\, \pi^{d/2}\, \Gam (n/2)}{\Gam ((n-d)/2)}. \ee
\end{lemma}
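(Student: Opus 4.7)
The plan is to reduce the identity to a comparison of two integral representations of a common one-variable integral. Both $(\hat f)^\vee(x)$ and $(Q^d f)(x)$ are $K$-zonal averages of $f$ around $x$, so each should be expressible in terms of the spherical mean $(M_x f)(s)$. Concretely, I would first observe that $(\hat f)^\vee(x)$ is exactly the mean value operator of Section 4.2 evaluated at $t=0$, namely $(\hat f)^\vee(x)=\check{\hat f}_{x}(0)$, so Lemma \ref{NNNDDEER} at $t=0$ gives
\[
(\hat f)^\vee(x)=c_{1}(I^{d/2}_{-}M_{x}f)(1), \qquad c_{1}=2^{d/2-1}\sigma_{d-1}\Gamma(d/2).
\]
Expanding the Riemann--Liouville integral at $r=1$ yields
\[
(\hat f)^\vee(x)=2^{d/2-1}\sigma_{d-1}\int_{1}^{\infty}(M_{x}f)(s)\,(s-1)^{d/2-1}\,ds.
\]

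Next I would compute $(Q^d f)(x)$ by the hyperbolic convolution formula (\ref{tag 2.17-HYP}), writing the integrand in polar coordinates $r\in[0,\infty)$ around $x$ with $s=\cosh r$. Using $\sinh^{n-1}r=(s-1)^{(n-1)/2}(s+1)^{(n-1)/2}$ and $dr=ds/\sqrt{(s-1)(s+1)}$, the combined weight
\[
\frac{(s-1)^{(d-n)/2}}{(s+1)^{n/2-1}}\,\sinh^{n-1}r\,\frac{dr}{ds}=(s-1)^{d/2-1}
\]
miraculously collapses, giving
\[
(Q^d f)(x)=\zeta_{n,d}\,\sigma_{n-1}\int_{1}^{\infty}(M_{x}f)(s)\,(s-1)^{d/2-1}\,ds.
\]
The interchange of integration in (\ref{BGBBQQ}) producing (\ref{tag 2.17-HYP}) is justified in the Lebesgue sense under our hypothesis, so the two sides are simultaneously finite.

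Comparing the two displays reduces the lemma to the constant identity
\[
\frac{c_{1}}{\zeta_{n,d}\,\sigma_{n-1}}=\frac{2^{d}\,\pi^{d/2}\,\Gamma(n/2)}{\Gamma((n-d)/2)},
\]
which is a direct computation from the explicit values of $\zeta_{n,d}$, $\sigma_{n-1}=2\pi^{n/2}/\Gamma(n/2)$, and $\sigma_{d-1}=2\pi^{d/2}/\Gamma(d/2)$, the $\Gamma(d/2)$ factors canceling in the process. No obstacle of any depth appears: the only mildly delicate point is organizing exponents so that the cancellation $(d-n)/2+(n-1)/2=(d-1)/2$ and $-n/2+1+(n-1)/2=1/2$ is visible, after which the matching with the Riemann--Liouville representation of $\check{\hat f}_{x}(0)$ is immediate. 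The overall structure parallels Fuglede's identity for $d$-plane transforms and its hyperbolic analogue for $d=n-1$ in \cite{Ru15}, with $N_{d}$ and the zonal reduction playing the role that the translation group and radial reduction play in the Euclidean and $(n-1)$-horospherical cases.
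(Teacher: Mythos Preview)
Your proposal is correct and follows essentially the same route as the paper: set $t=0$ in Lemma~\ref{NNNDDEER} to express $(\hat f)^\vee(x)$ as $2^{d/2-1}\sigma_{d-1}\int_{1}^{\infty}(M_{x}f)(s)\,(s-1)^{d/2-1}\,ds$, then use (\ref{tag 2.17-HYP}) to identify this with a multiple of $(Q^{d}f)(x)$. One small bookkeeping slip: in your final constant identity the numerator should be $2^{d/2-1}\sigma_{d-1}$ (the coefficient actually appearing in your second display), not $c_{1}=2^{d/2-1}\sigma_{d-1}\Gamma(d/2)$; with $c_{1}$ you would pick up an extra factor of $\Gamma(d/2)$, contrary to your remark that the $\Gamma(d/2)$ factors cancel.
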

\begin{proof} Setting $t=0$ in (\ref{gyuyyhER}), and using (\ref{tag 2.17-HYP}), we obtain
\bea (\hat f)^\vee(x)&=& 2^{d/2 -1}\, \sig_{d-1} \intl_1^\infty (s-1)^{d/2 -1}  (M_x f)(s)\, ds\nonumber\\
&=&   2^{d/2 -1}\, \sig_{d-1} \intl_0^\infty (\ch r-1)^{d/2 -1}  (M_x f)(\ch r)\,\sh r\,  dr\nonumber\\
&=&    \frac{\Gamma((n-d)/2)}{2^{d/2 +1}\,\pi^{n/2}\Gamma(d/2)} \intl_{\hn}\!\! f(y)\, \frac{([x,y]-1)^{(d -n)/2}}{([x,y]+1)^{n/2 -1}}\, dy=c\, (Q^d f)(x). \nonumber\eea
\end{proof}

Lemmas \ref {MMM-HYP3horT} and \ref {LemTheor 4.4HYQQ} imply the following inversion result.

\begin{theorem}\label{ThHORYP}  Let $\varphi = \hat f$, $f \in C^\infty_c(\hn)$, $1\le d\le n-1$.
If $d$ is even, then
\be\label {tag 1.6hnYO1} f=c \,\P(\Delta_H)\,\check\varphi, \ee
where
\[ \P (\Delta_H) = \prod\limits^{d/2}_{i=1} \big[\Delta_H + i(n-1-i)\big], \quad
  c=\frac{(-1)^{d/2}\,\Gam ((n-d)/2)}{2^d\, \pi^{d/2}\, \Gam (n/2)}.\]
\end{theorem}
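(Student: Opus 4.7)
The plan is to combine the two key ingredients already established in the paper: the identity relating the dual mean value operator $\check\varphi$ to the potential operator $Q^d f$ (Lemma \ref{MMM-HYP3horT}), and the inversion of $Q^d$ by a polynomial in $\Delta_H$ (Lemma \ref{LemTheor 4.4HYQQ}). With these in hand, the proof becomes a constant-tracking exercise.

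First I would verify the hypotheses of Lemma \ref{LemTheor 4.4HYQQ} for $\alpha = d = 2\ell$. The condition $d - n \neq 0, 2, 4, \ldots$ follows from $1 \le d \le n-1$, which makes $d - n < 0$. Also, for $f \in C_c^\infty(\hn)$ the integrals defining $Q^d f$ and $\hat f$ converge absolutely, so both sides of (\ref{taRRRrayhor}) make classical sense. By Lemma \ref{MMM-HYP3horT},
\[
\check\varphi(x) = (\hat f)^\vee(x) = c_1\, (Q^d f)(x), \qquad c_1 = \frac{2^d\,\pi^{d/2}\,\Gamma(n/2)}{\Gamma((n-d)/2)}.
\]

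Next I would rewrite Helgason's polynomial $\P_\ell(\Delta_H) = D_2 D_4 \cdots D_{2\ell}$ from Lemma \ref{LemTheor 4.4HYQQ} in terms of $\P(\Delta_H)$ appearing in the statement. Since
\[
D_{2i} = -\Delta_H - \frac{2i(2n-2-2i)}{4} = -\bigl[\Delta_H + i(n-1-i)\bigr],
\]
we have
\[
\P_\ell(\Delta_H) = \prod_{i=1}^{d/2} D_{2i} = (-1)^{d/2}\prod_{i=1}^{d/2}\bigl[\Delta_H + i(n-1-i)\bigr] = (-1)^{d/2}\,\P(\Delta_H).
\]
Lemma \ref{LemTheor 4.4HYQQ} then gives $(-1)^{d/2}\P(\Delta_H)\,Q^d f = f$, i.e.\ $\P(\Delta_H)\,Q^d f = (-1)^{d/2}f$.

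Finally I would combine these two displays: applying $\P(\Delta_H)$ to $\check\varphi = c_1\, Q^d f$ yields
\[
\P(\Delta_H)\,\check\varphi = c_1\,\P(\Delta_H)\,Q^d f = (-1)^{d/2}\,c_1\, f,
\]
which rearranges to $f = c\,\P(\Delta_H)\,\check\varphi$ with $c = (-1)^{d/2}/c_1 = (-1)^{d/2}\Gamma((n-d)/2)/(2^d\,\pi^{d/2}\,\Gamma(n/2))$, exactly as claimed.

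I do not anticipate a genuine obstacle here: the difficult analytic work is absorbed into Lemma \ref{MMM-HYP3horT} (which reduces $(\hat f)^\vee$ to a hyperbolic Riesz-type potential via the slice-integration identity (\ref{tag 2.17-HYP}) and Lemma \ref{zonal}) and into Lemma \ref{LemTheor 4.4HYQQ} (the differential calculus of the analytic family $Q^\alpha$). The only care needed is the sign bookkeeping in identifying $\P_\ell(\Delta_H)$ with $(-1)^{d/2}\P(\Delta_H)$ and the constant normalization; both are purely algebraic. Since $d$ is even, $d \ge 2$, so no boundary case such as $\alpha = n$ (which would require the logarithmic extension of Lemma \ref{Le76768QQ}) ever enters.
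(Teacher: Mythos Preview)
Your proposal is correct and follows exactly the approach of the paper, which simply states that the result follows from Lemmas~\ref{MMM-HYP3horT} and~\ref{LemTheor 4.4HYQQ}. Your constant and sign bookkeeping, including the identification $\P_\ell(\Delta_H)=(-1)^{d/2}\P(\Delta_H)$ and the check that $d-n<0$ avoids the excluded poles, is accurate and fills in precisely the details the paper leaves implicit.
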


\subsubsection{Inversion Formulas for Arbitrary $d$}

If $d$ is odd then a local inversion formula, like (\ref{tag 1.6hnYO1}), is unavailable in principle. Both even and odd cases   can be treated in the framework of a certain analytic family of operators generalizing the mean value operator $\vp \to \check \vp$. This approach is inspired by our previous works; cf. \cite[Theorem 1.2]{Ru02b}, \cite[Theorem A]{Ru02c}, \cite[Theorem 4.13]{Ru17}.

We replace $f$ in (\ref{ju5}) by the shifted function $f_x (y)=f(r_x y)$ where $x\in \hn$ is a new exterior variable and $r_x\in G$ satisfies  $r_x x_0=x$. Denote
\[
 h_\a (t)= e^{td/2}\,(\cosh t -1)^{\a/2 -1}\,  |\sh t|, \qquad \a>0,\]
 and write   (\ref{ju5}) as
 \bea \intl_{K\times \bbr} \hat f(r_x ka_t \xi_0)\, h_\a (t)\, dk dt&=& c_1\, \intl_{\hn} \frac{(y_{n+1} -1)^{(\a+d-n)/2}}{(y_{n+1} +1)^{n/2 -1}}\, f_x (y) \,dy\nonumber\\
\label{ju1u}&=&c_1\, \intl_{\hn} f(y)\, \frac{([x,y]-1)^{(\a+d -n)/2}}{([x,y]+1)^{n/2 -1}}\, dy, \eea
\[
c_1=\frac{2^{d/2}\sig_{d-1}\, \Gam (\a/2)\,  \Gam (d/2)}{ \sig_{n-1}\,\Gam ((\a+d)/2)}, \qquad \a>0.\]
In particular, for $\a=n-d$,
\be\label{poo} \intl_{K\times \bbr} \hat f(r_x ka_t \xi_0)\, h_{n-d} (t)\, dk dt=\tilde c_1\,  \intl_{\hn}\, \frac{ f(y)\, dy}{([x,y]+1)^{n/2 -1}}, \ee
 \[  h_{n-d} (t)\!=\!e^{td/2}  (\cosh t-1)^{(n-d)/2 -1}\,  |\sh t|, \]
 \be\label{poog}\tilde c_1 \!=\!2^{d/2}\, \pi^{(d-n)/2}\, \Gam ((n-d)/2).\ee

An expression in (\ref{ju1u}) is a constant multiple of the convolution  $Q^{\a +d} f$; cf. (\ref{BGBBQQ}). Changing normalization,
we obtain the following statement.
\begin{lemma} \label{duJJYPhor} Let
\be\label{jusg} (\overset *{\frH}{}^\a \vp)(x)=c_{\a}  \intl_{K\times \bbr} \vp(r_x ka_t \xi_0)\, h_\a (t)\, dk dt,\ee
\[  c_{\a}=\frac{\Gam ((n\!-\!\a\!-\!d)/2)}{2^{\a/2 +d+1}\, \pi^{d/2}\,\Gam (\a/2)\,  \Gam (n/2)},\qquad  \a>0, \quad \a+d-n\neq 0, 2, 4,, \ldots\, .\]
Then
\be\label{jusg1}  \overset *{\frH}{}^\a \hat f= Q^{\a +d} f\ee
provided that the integral on the right-hand side exists in the Lebesgue sense.
\end{lemma}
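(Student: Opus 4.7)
The plan is to recognize (\ref{jusg1}) as a mere re-normalization of the identity (\ref{ju1u}) already established in Example \ref{iio}. By the definition (\ref{jusg}), $(\overset *{\frH}{}^\a \hat f)(x)$ is precisely $c_\a$ times the left-hand side of (\ref{ju1u}). On the other hand, the right-hand side of (\ref{ju1u}) is $c_1$ times $\intl_{\hn} f(y)\, ([x,y]-1)^{(\a+d-n)/2}/([x,y]+1)^{n/2-1}\, dy$, which by the definition (\ref{BGBBQQ}) of the potential-type operator equals $(c_1/\zeta_{n,\a+d})\,(Q^{\a+d} f)(x)$. Consequently, the asserted equality reduces entirely to the numerical identity $c_\a\, c_1 = \zeta_{n,\a+d}$.

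I would verify this constant check by direct substitution. Using $\sig_{n-1} = 2\pi^{n/2}/\Gam(n/2)$ and $\sig_{d-1} = 2\pi^{d/2}/\Gam(d/2)$ in the definition of $c_1$ yields $c_1 = 2^{d/2}\pi^{(d-n)/2}\Gam(n/2)\Gam(\a/2)/\Gam((\a+d)/2)$. Multiplying by $c_\a = \Gam((n-\a-d)/2)/[2^{\a/2+d+1}\pi^{d/2}\Gam(\a/2)\Gam(n/2)]$, the factors $\Gam(\a/2)$, $\Gam(n/2)$, and $\pi^{d/2}$ cancel, and the powers of $2$ combine via $2^{d/2}\cdot 2^{-\a/2-d-1} = 2^{-(\a+d)/2-1}$, leaving exactly $\Gam((n-\a-d)/2)/[2^{(\a+d)/2+1}\pi^{n/2}\Gam((\a+d)/2)] = \zeta_{n,\a+d}$, as required.

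The only point requiring caution is ensuring that the chain of equalities in (\ref{ju1u}) is actually valid under the stated hypothesis (Lebesgue-sense existence of $(Q^{\a+d}f)(x)$). Under that hypothesis the integrand on the right of (\ref{ju1u}) is absolutely integrable, so the $G$-invariant change of variables $y\mapsto r_x y$ (which uses $r_x x_0 = x$ and $[r_x y, x_0] = [y, x]$) and the Fubini step underlying (\ref{ju5}), applied to the translated function $f_x(y) = f(r_x y)$, are both legitimate. I expect no serious obstacle beyond this bookkeeping; the real substance of the lemma was already done when (\ref{ju5}) was computed.
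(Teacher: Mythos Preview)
Your proposal is correct and follows exactly the paper's approach: the paper introduces (\ref{ju1u}) by substituting $f_x(y)=f(r_x y)$ into (\ref{ju5}), then states the lemma as the result of ``changing normalization,'' which is precisely the constant identity $c_\a c_1=\zeta_{n,\a+d}$ that you verify explicitly. Your careful check of that constant and the remark on absolute convergence simply fill in details the paper leaves implicit.
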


The next proposition shows that the mean value operator (\ref{hot2}) is  a constant multiple of the limit  of the operators (\ref{jusg}) as   $\a\to 0$.

\begin{proposition}\label{juusg1}
If  $\vp$ is a  compactly supported continuous function on $\Xi_d$, then
\be\label{jusg2}
\lim\limits_{\a\to 0} \overset *{\frH}{}^\a \vp = c^{-1}\, \check \vp\ee
where $c$ is a constant from (\ref {taRRRrayhor}).
\end{proposition}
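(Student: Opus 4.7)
The plan is to treat the family $h_\a(t) = e^{td/2}(\cosh t - 1)^{\a/2 -1}|\sinh t|$ as a one-dimensional approximate identity concentrated at $t=0$, and show that, after multiplication by $c_\a$, it selects the value $F(0)$ of the auxiliary function $F(t)=\intl_K \vp(r_x ka_t\xi_0)\, dk$. Because $\vp$ is continuous and compactly supported on $\Xi_d$, the function $F$ is continuous and compactly supported on $\bbr$, and $F(0)=\check\vp(x)$ by (\ref{hot2}). Hence once the limit of the one-variable integral in $t$ is understood, the proposition follows after Fubini.

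The first computational step is to recast $h_\a$ so that the singularity at $t=0$ is exposed. Using $\cosh t-1 = 2\sinh^2(t/2)$ and $|\sinh t| = 2|\sinh(t/2)|\cosh(t/2)$, one obtains
\[h_\a(t) = 2^{\a/2}e^{td/2}\cosh(t/2)|\sinh(t/2)|^{\a-1}.\]
Next, I split $\intl_\bbr h_\a(t) F(t)\, dt$ into its $t>0$ and $t<0$ halves and substitute $s=\sinh(t/2)$ and $s=-\sinh(t/2)$, respectively; using $e^{t/2}=\cosh(t/2)\pm \sinh(t/2)=\sqrt{1+s^2}\pm s$ I arrive at
\[\intl_\bbr h_\a(t) F(t)\, dt = 2^{1+\a/2}\intl_0^\infty s^{\a-1} G(s)\, ds,\]
where $G(s) = (\sqrt{1+s^2}+s)^d F(2\,\hbox{arsinh}\,s) + (\sqrt{1+s^2}-s)^d F(-2\,\hbox{arsinh}\,s)$ is continuous and compactly supported on $[0,\infty)$, with $G(0)=2F(0)$.

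The central analytic fact I would then invoke is the elementary limit $\a\intl_0^\infty s^{\a-1}G(s)\, ds\to G(0)$ as $\a\to 0^+$ for continuous compactly supported $G$, proved by writing $G(s)=G(0)+[G(s)-G(0)]$ and using the uniform continuity of $G$ at $0$. Combined with the asymptotics $\Gam(\a/2)\sim 2/\a$, which yield $c_\a \sim \a\,\Gam((n-d)/2)/(2^{d+2}\pi^{d/2}\Gam(n/2))$ as $\a\to 0^+$, this gives
\[c_\a\intl_\bbr h_\a(t) F(t)\, dt \longrightarrow \frac{\Gam((n-d)/2)}{2^d\pi^{d/2}\Gam(n/2)}\, F(0) = c^{-1}\check\vp(x),\]
with $c$ as in (\ref{taRRRrayhor}).

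The main obstacle is precisely the $0\cdot\infty$ cancellation at the heart of the argument: the prefactor $1/\Gam(\a/2)$ in $c_\a$ vanishes linearly in $\a$, while the integral develops a simple pole $\sim 1/\a$ from the $|\sinh(t/2)|^{\a-1}$ singularity at $t=0$; tracking both contributions carefully through the explicit substitution is what pins down the correct finite constant $c^{-1}$ rather than some other multiple. A subsidiary technicality is the uniformity needed to pass the limit under the $K$-integral, but this is immediate once one notes that for $\vp$ compactly supported the $t$-integration is over a fixed compact set independent of small $\a$.
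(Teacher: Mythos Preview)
Your argument is correct and rests on the same underlying idea as the paper's: after normalization by $c_\a$, the weight $h_\a(t)$ acts as an approximate identity concentrated at $t=0$, so the integral picks out $F(0)=\check\vp(x)$. The implementations differ in detail. The paper keeps $t$ as the variable, factors $|\sh t|^{\a-1}=|t|^{\a-1}(\sh t/t)^{\a-1}$, absorbs all smooth factors into a function $\Om_\a(t)$, and recognizes the whole expression as the one-dimensional Riesz potential $\gam_1(\a)^{-1}\intl_\bbr|t|^{\a-1}\Om_\a(t)\,dt$, whose limit $\Om_0(0)$ is read off from a standard Riesz-kernel lemma in \cite{Ru15}. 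You instead substitute $s=|\sinh(t/2)|$ to reduce everything to the elementary fact $\a\intl_0^\infty s^{\a-1}G(s)\,ds\to G(0)$ for compactly supported continuous $G$, which you prove directly together with the asymptotic $\Gam(\a/2)\sim 2/\a$. Your route is more self-contained and avoids the external reference; the paper's is shorter and ties the computation into the Riesz-potential machinery already in play. Both arrive at the same constant $c^{-1}$.
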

\begin{proof} We write (\ref{jusg}) as
\bea
(\overset *{\frH}{}^\a \vp)(x)&=&c_{\a}  \intl_{\bbr} e^{td/2}\,  |\sh t|^{\a-1} (\cosh t +1)^{1-\a/2}\, dt\intl_K \vp(ka_t \xi_0)\, dk \nonumber\\
\label{kip} &=&\frac{1}{\gam_{1} (\a)}\, \intl_{\bbr}   |t|^{\a-1}\, \Om_\a (t)\, dt,\eea
where
\[
\gam_{1} (\a)=\frac{2^\a\, \pi^{1/2} \, \Gam (\a/2)}{\Gam ((1-\a)/2)}, \]
\[ \Om_\a (t)=\frac{c_{\a}}{\gam_{1} (\a)}\,
\left (\frac {\sh t}{t}\right )^{\a-1} (\cosh t +1)^{1-\a/2}\,e^{td/2}\, \intl_K \vp(ka_t \xi_0)\, dk.\]
By the well-known properties of Riesz kernels (see, e.g. \cite[Lemma 3.2]{Ru15}), the limit of the expression (\ref{kip})  as   $\a\to 0$ is
\[
\Om_\a (0)\big |_{\a=0}= c^{-1}\, \check \vp,\]
where $c$ is a constant from (\ref {taRRRrayhor}).
\end{proof}

We will need  an analogue of Lemma \ref{duJJYPhor} for the case $\a=n-d$, which was excluded in (\ref{jusg1}) because of the pole  of the gamma function in $c_a$. Starting from (\ref{jusg}), we
 define
\be\label{tag 4.6OFR1HOOO8}
(\overset *{\frH}{}^{n-d} \vp)(x)=c_{n,d}\intl_{K\times \bbr} \vp(r_x ka_t \xi_0)\, \tilde h_{n-d} (t)\, dk dt,\ee
where
\[c_{n,d}=-\frac{1}{2^{(n+d)/2  +1}\,\pi^{d/2}\,\Gam (n/2)\,\Gam ((n-d)/2)},\]
\[\tilde h_{n-d} (t)=e^{td/2} \,  |\sh t| \, (\cosh t-1)^{(n-d)/2 -1}\, \log(\cosh t-1).\]
We  also use the notation $Q^{n}$ and $ \tilde c_1$ from (\ref{tZ44FR1}) and (\ref{poog}), respectively.

\begin{proposition} \label{duJJYP1} Let $\vp = \hat f$, $f \in C^\infty_c(\hn)$, $1\le d\le n-1$. Then
\be\label{tag 4.6OFR1HOOO}
\overset *{\frH}{}^{n-d} \vp = Q^{n} f+ \Phi,\ee
where
\bea\label{tMJOO8}
\Phi (x)&=& \gam_{n,d}\,\intl_{\hn} \frac{f(y)\,dy}{([x,y]+1)^{n/2 -1}}\\
&=& \tilde\gam_{n,d}\,\intl_{K\times \bbr} \vp(r_x ka_t \xi_0)\, \tilde h_{n-d} (t)\, dk dt,\nonumber\eea
\[\gam_{n,d} =\frac{\psi(n/2)-\psi ((n-d)/2)}{2^{n/2+1}\,\pi^{n/2}\,\Gam (n/2)}, \qquad \psi(z)=\frac{\Gam' (z)}{\Gam (z)},\]
\[
\tilde\gam_{n,d}=\frac{\gam_{n,d}}{\tilde c_1}=\frac{\psi(n/2)-\psi ((n-d)/2)}{2^{(n+d)/2+1}\,\pi^{d/2}\,\Gam (n/2)\,\Gam ((n-d)/2)}.\]
\end{proposition}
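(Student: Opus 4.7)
The plan is to obtain (\ref{tag 4.6OFR1HOOO}) by analytic continuation of Lemma \ref{duJJYPhor} through the excluded value $\a = n-d$, handling the simple singularities on both sides simultaneously in the spirit of the definition (\ref{tZ44FR1}) of $Q^n$. Set $\e := \a - (n-d)$ and factor $h_\a(t) = h_{n-d}(t)(\cosh t-1)^{\e/2}$; the elementary expansion $z^{\e/2} = 1 + \tfrac{\e}{2}\log z + O(\e^2)$ then yields
\[ h_\a = h_{n-d} + \tfrac{\e}{2}\,\tilde h_{n-d} + O(\e^2), \qquad ([x,y]-1)^{\e/2} = 1 + \tfrac{\e}{2}\log([x,y]-1) + O(\e^2). \]
The factor $\Gam(-\e/2) = -2/\e - \gam + O(\e)$ common to the prefactors $c_\a$ and $\zeta_{n,\a+d}$ supplies the simple poles on both sides, while the surviving regular parts are governed by $\psi((n-d)/2)$ and $\psi(n/2)$ coming from $\log\Gam(\a/2)$ and $\log\Gam((\a+d)/2)$ evaluated at $\a = n-d$.

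Substituting these expansions into $\overset *{\frH}{}^\a \hat f = Q^{\a+d} f$ and comparing coefficients of $\e^{-1}$ reproduces the auxiliary identity (\ref{poo}), confirming that the residues match and showing in particular that $\overset *{\frH}{}^{n-d}\vp$ is a well-defined limit arising from the $O(\e)$ term of $h_\a$ (with the constant $c_{n,d}$ being exactly one-half the residue of $c_\a$). Comparing coefficients of $\e^0$ yields an identity of the form
\[ \overset *{\frH}{}^{n-d}\vp - (Q^n f)(x) = R\,\intl_{\hn}\frac{f(y)\,dy}{([x,y]+1)^{n/2-1}}, \]
in which the log-kernel on the right hand side of the original identity is recognized as $(Q^n f)(x)/\zeta'_n$ via (\ref{tZ44FR1}), and the constant $R$ collects the regular parts of $c_\a$, $\zeta_{n,\a+d}$, and the factor $\tilde c_1$ coming from (\ref{poo}).

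It remains to identify $R$ with $\gam_{n,d}$. This is a direct computation: the Euler constant and $\log 2$ contributions produced by the simultaneous expansions of $\Gam(-\e/2)/\Gam(\a/2)$ and $\Gam(-\e/2)/\Gam((\a+d)/2)$ cancel pairwise, leaving $R = \zeta'_n(\psi((n-d)/2) - \psi(n/2)) = \gam_{n,d}$. This establishes the first expression for $\Phi$. The second expression follows by rewriting the integral against $f$ via (\ref{poo}) and using the identity $\tilde\gam_{n,d}\,\tilde c_1 = \gam_{n,d}$.

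The main obstacle is the final $\psi$-bookkeeping: four different gamma factors contribute to $R$, and one must verify that the Euler constants and $\log 2$ terms cancel exactly so that only the clean digamma difference survives. The computation is elementary but unforgiving, and getting the signs and normalizations right is essential; once this is done, the proposition is an immediate consequence of the Laurent expansion of Lemma \ref{duJJYPhor}.
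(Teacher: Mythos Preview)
Your approach is correct and is essentially the same as the paper's: both analytically continue the identity of Lemma~\ref{duJJYPhor} through the pole at $\a=n-d$, using (\ref{poo}) to match the singular parts so that only the finite remainder survives. The paper phrases this as subtracting the singular terms $c_\a I_1$ and $\zeta_{n,\a+d} I_2$ (with $I_1=\tilde c_1 I_2$) and taking the limit $\a\to n-d$ of the resulting difference quotients, whereas you carry out the equivalent Laurent expansion in $\e=\a-(n-d)$ and match coefficients; the digamma bookkeeping you flag as the main obstacle is exactly the computation of $\lim_{\a\to n-d}(\zeta_{n,\a+d}-c_\a\tilde c_1)$ that the paper leaves implicit.
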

\begin{proof}
  For  $\a  \neq n-d$, but close to $n-d$, we  write (\ref{jusg1}) as
\bea
 &&c_{\a}  \intl_{\Xi_d} \intl_{K\times \bbr} \hat f(r_x ka_t \xi_0)\, e^{td/2}\nonumber\\
   &&\times [(\cosh t\!-\!1)^{\a/2 -1}\! - \!(\cosh t\!-\!1)^{(n-d)/2 -1}]\,  |\sh t|\, dk dt+ c_{\a} I_1\nonumber\\
 &&=\z_{n,\a+d} \intl_{\hn}  f(y)\, \frac{([x,y]-1)^{(\a+d-n)/2} -1}{([x,y]+1)^{n/2-1}}\, dy +\z_{n,\a+d}\,I_2,\nonumber\eea
where
 \[
 I_1= \intl_{K\times \bbr} \hat f(r_x ka_t \xi_0)\, e^{td/2} (\cosh t-1)^{(n-d)/2 -1}\,  |\sh t|\, dk dt,\]
 \[ I_2=\intl_{\hn}  \frac{ f(y)\,dy}{([x,y]+1)^{n/2-1}}.\]
 By (\ref{poo}),  $I_1=\tilde c_1 I_2$. Moving $c_{\a} I_1= c_{\a}\tilde c_1 I_2$ to the right-hand side and
passing to the limit  as $\a \to n-d$, we obtain (\ref{tag 4.6OFR1HOOO}).
\end{proof}

Now we can formulate  the  inversion result for $\hat f$ in the most general form.

\begin{theorem}\label{ThHORYP1}  Let $\varphi = \hat f$, $f \in C^\infty_c(\hn)$,
\be\label {tag 1HORnYO} \P_\ell (\Delta_H) = (-1)^{\ell}\prod\limits^\ell_{i=1} \big[\Delta_H + i(n-1-i)\big], \qquad  \ell \in\bbn.\ee

\noindent {\rm (i)}  If $n$ is odd, then
\be\label {tag 1.6hnYO} f= \P_\ell (\Delta_H) \overset *{\frH}{}^{2\ell-d} \vp \qquad \forall \ell \ge d/2.\ee

\noindent {\rm (ii)} \ If  $n=2$, then
\be\label {tag 1HORnYOA} f= -\Delta_H \overset *{\frH}{}^1 \vp +\frac{1}{4\, \pi}\intl_{\bbr} \vp (a_t\xi_0)\, dt.\ee

\noindent {\rm (iii)} \ If  $n=4,6, \ldots $, then
\be\label {tag 1HORnYOAi} f= \P_{n/2}(\Delta_H)  \overset *{\frH}{}^{n-d} \vp.  \ee
\end{theorem}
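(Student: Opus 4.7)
The plan is to reduce the three inversion formulas to the Riesz-type identities $\P_\ell(\Delta_H) Q^{2\ell} f = f$ and $\P_{n/2}(\Delta_H) Q^n f = f$ by combining either Lemma~\ref{duJJYPhor}, which gives $\overset{*}{\frH}{}^\a \hat f = Q^{\a+d} f$ away from the critical value, or Proposition~\ref{duJJYP1}, which supplies the correct replacement at the critical value $\a = n - d$. In all three cases the operator $\P(\Delta_H)$ is applied to the identity produced by one of these two results, and the Beltrami--Laplace polynomial either inverts the potential directly or additionally annihilates the ``parasitic'' correction term.

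For part (i), set $\a = 2\ell - d$. Since $n$ is odd while $2\ell$ is even, the number $\a + d - n = 2\ell - n$ is odd and hence different from $0, 2, 4, \ldots\,$, so the admissibility hypothesis of Lemma~\ref{duJJYPhor} is automatic whenever $\a > 0$, i.e.\ $\ell > d/2$. The lemma then yields $\overset{*}{\frH}{}^{2\ell-d}\vp = Q^{2\ell} f$, and applying $\P_\ell(\Delta_H)$ together with Lemma~\ref{LemTheor 4.4HYQQ} produces~(\ref{tag 1.6hnYO}). The boundary case $\ell = d/2$ (only possible when $d$ is even) corresponds to $\a = 0$: Proposition~\ref{juusg1} identifies $\overset{*}{\frH}{}^0 \vp$ with a constant multiple of $\check\vp$, which is inverted by Theorem~\ref{ThHORYP}; the constants are readily checked to agree with the sign in $\P_{d/2}(\Delta_H) = (-1)^{d/2}\prod_{i=1}^{d/2}(\Delta_H + i(n-1-i))$.

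For parts (ii) and (iii), choose the critical parameter $\a = n - d$. Proposition~\ref{duJJYP1} yields $\overset{*}{\frH}{}^{n-d}\vp = Q^n f + \Phi$, where the correction term $\Phi$ is a scalar multiple of the operator $Bf$ of~(\ref{tYUY341}). In part (iii), with $n$ even and $n \ge 4$, the product $\P_{n/2}(\Delta_H) = D_2 D_4 \cdots D_n$ acts on $Q^n f$ to give $f$ by Proposition~\ref{CoYYY5HY}, and annihilates $\Phi$ because $D_n \Phi = 0$ by Lemma~\ref{Le76768QQ1} applied to the innermost factor. In part (ii), with $n = 2$ and $d = 1$, the kernel $([x,y]+1)^{n/2-1}$ is identically $1$, so $\Phi$ reduces to a scalar multiple of $\int_{\bbh^2} f(y)\, dy$; then $-\Delta_H$ kills $\Phi$, while $-\Delta_H Q^2 f = f - (4\pi)^{-1}\int_{\bbh^2} f$ by Proposition~\ref{CoYYY5HY}, and~(\ref{o8a5btg2}) rewrites $\int_{\bbh^2} f(y)\, dy$ as $\int_\bbr \vp(a_t\xi_0)\, dt$, producing~(\ref{tag 1HORnYOA}).

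The main technical obstacle is the bookkeeping at the critical parameter $\a = n - d$: it is precisely the pole of $\Gam((n-\a-d)/2)$ in $c_\a$ that forces the logarithmic kernel $\tilde h_{n-d}$ and the correction $\Phi$, and one must verify that the explicit constants line up --- in particular, that the coefficient in part (ii) is exactly $1/(4\pi)$ as stated, and that in part (iii) the scalar relating $\Phi$ to $Bf$ does not interfere with the vanishing of $D_n\Phi$. Once these normalizations are settled, the three inversion formulas follow by assembling the named lemmas and propositions.
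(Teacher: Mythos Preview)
Your proposal is correct and follows essentially the same route as the paper's proof: part (i) via Lemma~\ref{duJJYPhor} with $\a=2\ell-d$ and then (\ref{tYUYUFR1}), with the boundary case $\ell=d/2$ handled by Theorem~\ref{ThHORYP}; parts (ii) and (iii) via Proposition~\ref{duJJYP1} to produce $Q^n f + \Phi$, followed by Proposition~\ref{CoYYY5HY} to invert $Q^n$ and Lemma~\ref{Le76768QQ1} (or the triviality $\Delta_H(\text{const})=0$ when $n=2$) to kill $\Phi$. The paper's argument is identical in structure and in the lemmas invoked; your caveat about constants is well placed but the normalizations in Propositions~\ref{duJJYP1} and~\ref{CoYYY5HY} do match as written.
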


\begin{proof}  If $n$ is odd, then (\ref{jusg1}) with $\a=2\ell -d >0$ gives $\overset *{\frH}{}^{2\ell-d} \vp= Q^{2\ell} f$  and the result follows by (\ref{tYUYUFR1}). If $2\ell -d =0$, the desired statement was obtained in Theorem \ref{ThHORYP}; cf. Proposition \ref{juusg1}.

 If $n=2$, then, by (\ref{tag 4.6OFR1HOOO}),
 \[\stackrel{*}{\fr{H}}{\!}^1 \vp = Q^{2} f+ \Phi, \qquad \Phi= \gam \intl_{\hn} f(y)\,dy, \quad  \gam=\frac{\psi (1) -\psi(1/2)}{4\pi}.\]
Applying  $-\Delta_H$ to both sides of this equality, owing to (\ref{tYUY343X}) and (\ref{o8a5btg2}), we obtain
\[
-\Delta_H \stackrel{*}{\fr{H}}{\!}^1 \vp=f-\frac{1}{4\, \pi}\intl_{\bbh^2} f(y)=f-\frac{1}{4\, \pi}\intl_{\bbr} \vp (a_t\xi_0)\, dt.\]
 This gives (\ref{tag 1HORnYOA}).

If $n=4,6, \ldots $, then, by (\ref{tag 4.6OFR1HOOO}) and (\ref{tYUY341}),
\bea
\P_{n/2}(\Delta_H) \stackrel{*}{\fr{H}}{\!}^{n-d} \vp&=&\P_{n/2}(\Delta_H) Q^n f +\P_{n/2}(\Delta_H) \Phi \nonumber\\
&=&f + \frac{ \gam_{n,d}}{\z_n'}\, \P_{n/2}(\Delta_H)\, Bf.\nonumber\eea
By Lemma \ref{Le76768QQ1}, $\P_{n/2}(\Delta_H)\, Bf=D_2\cdots D_{n-2}Bf=0$. Hence, we are done.
\end{proof}

\section{Conclusion}
In the present paper, we studied  the Radon-like transform  over $d$-dimensional horospheres in the hyperbolic space $\hn$ for any $0<d<n$. Our main concern was explicit inversion formulas for this transform acting on continuous and $L^p$ functions. The set $\Xi_d$ of all $d$-horospheres was defined in the group-theoretic terms as a $G$-orbit of the basic $d$-horosphere lying in the cross-section of $\hn$ by a fixed $(d+2)$-dimensional coordinate  plane containing the $x_{n+1}$-axis. One can give an alternative, ``more geometric" definition of the set of  $d$-horospheres as the set $\tilde\Xi_d$ of all cross-sections of the hyperboloid $\hn$ by $(d+1)$-dimensional   planes  having the orthogonal $(n-d)$-dimensional normal frames lying in the asymptotic cone $\Gam$.

Note also that $ \dim \Xi_d= (n-d)(d+2) -1$. This formula is a consequence of the isomorphism $\Xi_d \sim G/ M_d N_d$ (see Proposition \ref {plu}) and known dimensions\footnote{The formula for the dimension of $G$ is immediate, e.g.,  from the Iwasawa decomposition $G=KAN$.}
\[\dim G=\dim SO_0 (n,1)=\frac{n(n+1)}{2}, \qquad \dim O(n)= \frac{n(n-1)}{2}.\]
It follows that $ \dim \Xi_d > \dim \hn=n$ if and only if $1\le d< n-1$.

The following open problems arise.

\noindent{\bf Problem 1.} Investigate the relationship between $\Xi_d$ and $\tilde\Xi_d$.

\noindent{\bf Problem 2.} Reduce the overdetermindness  of the $d$-horospherical Radon transform in the case $d< n-1$.

In Problem 2, our aim is to define an $n$-dimensional submanifold  $\Xi_{d,0}$  of $\Xi_d$, so that  a function $f(x)$  could be recovered from its $d$-horospherical transform $\hat f(\xi)$ when the values of  $\hat f(\xi)$ are  known  only for $\xi\in \Xi_{d,0} $. Inegral-geometric problems of this kind amount to I.M. Gelfand \cite {Gelf60}. See also \cite{Ru15a}  and references therein.

We plan to address these problems in the future.

\end{document}